\documentclass[10pt,leqno]{amsart}
\usepackage{graphicx}
\usepackage{indentfirst,csquotes}

\usepackage[english]{babel}
\usepackage{dsfont}
\newcommand{\one}{\mathds{1}}

\usepackage{fancyhdr} 
\usepackage{subfig}

\usepackage{hyperref}
\usepackage{tikz}
\usetikzlibrary{calc}
\usepackage{amssymb}
\usepackage{mathtools}
\usepackage{amsmath}
\usepackage{latexsym}
\usepackage{amsthm}
\usepackage{esint}

\usepackage{comment}

\usepackage{etoolbox}
\usepackage{dirtytalk}
\usepackage{comment}
\usepackage{hyperref}
\hypersetup{
	colorlinks=true,
	linkcolor=blue,
	citecolor=teal
}

\newcommand{\R}[0]{\mathbb{R}}

\newcommand{\p}{\partial}
\newcommand{\dd}{\, \mathrm{d}}

\newcommand{\Hau}{\mathcal{H}}

\newcommand{\dist}{\mathrm{dist}}

\newcommand{\delomega}{\partial\Omega}

\newcommand{\dH}{\, \mathrm{d} \mathcal{H}^{n-1}}

\newcommand{\argmin}{\operatorname{argmin}}

\newtheorem{Theorem}{Theorem}
\numberwithin{Theorem}{section}

\newtheorem{Lemma}[Theorem]{Lemma}

\theoremstyle{definition}
\newtheorem{Definition}[Theorem]{Definition}
\theoremstyle{remark}

\numberwithin{equation}{section}
\numberwithin{equation}{section}
\DeclareMathOperator*{\Span}{span}

\makeatletter
\newenvironment{customproof}[1]{%
	\par\pushQED{\qed}%
	\normalfont\topsep6pt \trivlist
	\item[\hskip\labelsep\itshape
	Proof of #1\@addpunct{.}]\ignorespaces
}{%
	\popQED\endtrivlist\@endpefalse
}
\makeatother

\begin{document}
	\title[Continuity and monotonicity of weighted perimeters of convex bodies]{Continuity and monotonicity of weighted perimeters of convex bodies} 
	\author[]{Gyula Csat\'o and Davide Giovagnoli}
	\date{\today}
	\address{Gyula Csató: Facultat de Matemàtiques i Informàtica, Universitat de Barcelona\\ 
		Gran Via de les Corts Catalanes, 585, 08007, Barcelona, Spain
	}
	\address{Centre de Recerca Matemàtica, Edifici C, Campus Bellaterra, 08193 Bellaterra, Spain}
	\email{gyula.csato@ub.edu}
	\address{Davide Giovagnoli: Dipartimento di Matematica, Università di Bologna, Piazza di Porta \\ S.Donato 5, 40126, Bologna-Italy}
	\email{d.giovagnoli@unibo.it}
	
	\maketitle
	\begingroup
	\let\thefootnote\relax
	\footnotetext{\small  Funding: 
     Part of this project was done during a visit of D.G. at the Universitat de Barcelona and during a visit of G.C. at the University of Bologna. The kind hospitality of both departments is acknowledged.
     G.C. was supported by the Spanish AEIs projects PID2024-156429NB-I00, PID2021-125021NA-I00, and PID2021-123903NB-I00. This work was supported by Gruppo Nazionale per l'Analisi Matematica la
Probabilità e le loro Applicazioni - INdAM.
\\
Keywords: Shape optimization, Weighted perimeter, Convex surfaces. \\ MSC: 49Q10, 52A20, 52A40, 49Q20} 
\endgroup
		\begin{abstract}We investigate weighted perimeters of convex bodies in $\mathbb{R}^n$.
    Our main result states that the weighted perimeter  is continuous with respect to the Hausdorff metric for every radial weight function. We also establish monotonicity under inclusion under suitable assumptions on the radial weight, and show that these assumptions are sharp. These results are applied to show the existence of minimizers for a shape optimization problem.       
	\end{abstract}
	
	\tableofcontents

\section{Introduction}

This paper investigates properties of functionals of convex bodies (bounded closed convex sets) $\Omega\subset \R^n$, $n\geq 2,$ defined as follows. Given a function $\phi:(0,\infty)\to\R$ we set 
$$
  E_{\phi}(\delomega):=\eta_{\Omega}\int_{\p\Omega}\phi(|x|)
  \dd \Hau^{n-1}(x)\quad\text{where}\quad
  \eta_{\Omega}= \left\{\begin{array}{rl}
    1 & \text{ if $\Omega$ has a nonempty interior,}
    \\
    2 & \text{ if $\Omega$ degenerate.}
  \end{array}\right.
$$
Here, a degenerate convex body $\Omega$ is defined as a convex body which has empty interior with respect to the standard topology in $\R^n$. For a convex body, degeneracy is equivalent to being contained in an $(n-1)$-dimensional hyperplane. It is also equivalent to $\p\Omega=\Omega$, where $\p\Omega$ denotes the topological boundary of $\Omega$. $\mathcal{H}^{n-1}$ denotes the $(n-1)$-dimensional Hausdorff measure. 

The main example we have in mind is the weighted perimeter defined by
\begin{equation}\label{intro:def:Ep}
  E_p(\delomega):=\eta_{\Omega}\int_{\p\Omega}|x|^p
  \dd \Hau^{n-1}(x)\quad\text{ where }p>-(n-1).
\end{equation}
Let us  mention that $E_2$ is the angular moment of inertia of $\delomega$ which has been studied for instance in \cite{Hurwitz1902,SachsIandII,Hall1985,FreitasLaugesenLiddell,Charlotte2024,LaMannaSannipoli}.
In \cite{CGR} we have introduced some new optimization problems for the $p$-th moments $E_p$ under a standard perimeter constraint.  


The principal goal of this note is to generalize the known result on the continuity of the standard perimeter of convex bodies with respect to the Hausdorff metric. This main result of the present paper, Theorem \ref{thm:continuity_energy} below, has been announced and used in \cite{CGR} in a crucial way. We present a self-contained proof here because the result is of independent interest.

Proofs of the continuity of the standard perimeter can be found in \cite[Result 3.2.36, p. 272]{Federer} or in \cite[Section 2.3]{Bucur}. In the case where the weighted perimeter is $E_2$ and $n=3$ the continuity and monotonicity have already been discussed in \cite{FreitasLaugesenLiddell}, but we  are not aware of a proof in the literature. Moreover, we will deal with more general radial weights than just powers of the distance function, and in any dimension. 

With our method we also deal with the monotonicity of weighted perimeters of convex sets. 
Monotonicity can only hold under some additional assumptions, due to the result in \cite[Theorem 1.1]{saracco2024monotonicity}, where it has been proven that if  $E_\phi(\p A)\leq E_\phi(\p B)$ for every pair of convex bodies $A\subset B$, then $\phi$ must be constant. Our Theorem \ref{thm:monotonicity} provides such appropriate assumptions. Finally, we will apply this theorem to give a simple proof of the existence of minimizers for a certain minimization problem among convex bodies.  

Before stating the main results, we recall the Hausdorff distance and introduce the admissible class of weights. Firstly,
the notion of Hausdorff metric $d_H$ (also usually called the Hausdorff distance in the literature). 
For $\Omega_1,\Omega_2 \subset \R^n$ this distance is defined as
\[
d_H(\Omega_1,\Omega_2):= \inf \{\delta >0 \, : \, \Omega_1 \subset \Omega_2 + B_\delta, \, \Omega_2 \subset \Omega_1 + B_\delta \},
\]
where we use the notation of Minkowski sum of two sets. The importance of this metric, especially in our applications, stems from the Blaschke selection principle, which states that uniformly bounded families of convex bodies are precompact in this metric.  

Secondly, let us introduce the class of weight functions we will deal with. As a motivation consider the weights $\phi(t)=t^p$ appearing in \eqref{intro:def:Ep}. The condition $p>-(n-1)$ is equivalent to the fact that $|x|^p$ is locally integrable on $(n-1)$-dimensional hypersurfaces, or in other words
$x' \in \R^{n-1} \mapsto \phi(|x'|) $ is integrable near the origin in $\R^{n-1}$, that is
\begin{equation}\label{eq:integrability_phi}
\int_0^{1} |\phi(t)| t^{n-2} \dd t < \infty.
\end{equation}
It is easy to see (see Section \ref{Sec:continuity}) that \eqref{eq:integrability_phi} is equivalent to $\int_0^1 |\phi'(t)| t^{n-1} \dd t<\infty$ for absolutely continuous functions $\phi$ whose derivative $\phi'$  does not change sign. For $\phi(t)=t^p$ this equivalence is obvious by partial integration.
This motivates the following definition.

\begin{Definition}[Weight function]\label{intro:gy:weight function} A \textit{weight function} $\phi$  satisfies the following assumptions.  $\phi$ is absolutely continuous on $(0,\infty)$ and
\begin{equation} \label{cond_int_derivative}
    \int_0^1 |\phi'(t)| t^{n-1} \dd t<\infty.
\end{equation}
\end{Definition}

We will discuss the relation between \eqref{cond_int_derivative} and \eqref{eq:integrability_phi} in
the beginning of Section \ref{Sec:continuity}. In particular, we will show that \eqref{cond_int_derivative} is stronger and ensures \eqref{eq:integrability_phi}.
The integrability condition \eqref{eq:integrability_phi} guarantees that the energy $E_{\phi}(\delomega)$ is well defined.

We can now introduce the main results of the paper. The first result concerns the continuity of the energy functional $\Omega\mapsto E_\phi(\p\Omega)$.

\begin{Theorem}[Continuity of the energy functional]\label{thm:continuity_energy} Let $n\geq 2$ be an integer, $\phi$ be a weight function according to Definition \ref{intro:gy:weight function} and 
       $(\Omega_i)_{i \geq 1}$ be a sequence of  convex bodies in $\R^n$. If $\Omega_i\to \Omega$ for $i \to \infty$ in the Hausdorff metric then 
       \[
       \lim_{i\to\infty}E_\phi(\p\Omega_i)= E_{\phi}( \p\Omega).
       \]
\end{Theorem}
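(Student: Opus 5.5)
Our plan is to reduce the weighted statement to the classical continuity of the unweighted perimeter by a layer-cake (coarea-type) representation of $\phi$. Since $\phi$ is absolutely continuous, for $x\neq 0$ we write $\phi(|x|)=\phi(1)-\int_{|x|}^{1}\phi'(t)\dd t$ when $|x|<1$, and $\phi(|x|)=\phi(1)+\int_1^{|x|}\phi'(t)\dd t$ when $|x|\ge 1$. Integrating over $\p\Omega$ and applying Fubini (legitimate since $\phi'$ is integrable on compact subsets of $(0,\infty)$, and near $0$ we use \eqref{cond_int_derivative} as explained below), we obtain
\[
E_\phi(\p\Omega)=\phi(1)\,P(\Omega)-\int_0^1\phi'(t)\,P_t^-(\Omega)\dd t+\int_1^\infty\phi'(t)\,P_t^+(\Omega)\dd t .
\]
Here $P(\Omega)=\eta_\Omega\Hau^{n-1}(\p\Omega)$, $P_t^-(\Omega)=\eta_\Omega\Hau^{n-1}(\p\Omega\cap B_t)$ and $P_t^+(\Omega)=\eta_\Omega\Hau^{n-1}(\p\Omega\setminus B_t)$. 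Because all $\Omega_i$ and $\Omega$ lie in a common ball $B_R$, the last integral runs only over $[1,R]$.

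The key geometric input is the estimate
\[
\eta_\Omega\Hau^{n-1}(\p\Omega\cap B_t)\le C_n t^{n-1}
\]
for every convex body $\Omega$ and every $t>0$. This follows from monotonicity of perimeter for convex sets applied to $\Omega\cap \overline{B_t}$, whose boundary contains $\p\Omega\cap B_t$. We have $P(\Omega\cap\overline{B_t})\le P(\overline{B_t})$, and in the degenerate case we use the cross section of the ball, getting the factor $2$. Together with \eqref{cond_int_derivative}, this gives the integrable majorant $|\phi'(t)|C_nt^{n-1}$ on $(0,1)$. It also justifies Fubini. On $[1,R]$ the functions $P_t^+$ are bounded by $C_nR^{n-1}$ and $\phi'\in L^1(1,R)$.

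By dominated convergence it then suffices to show, for a.e.\ $t$, that $P_t^\pm(\Omega_i)\to P_t^\pm(\Omega)$ and $P(\Omega_i)\to P(\Omega)$. The last convergence is the classical result we quote from \cite{Federer} or \cite{Bucur}, including the degenerate limit, where the factor $\eta$ is exactly what makes the limit correct. Since $P_t^+=P-P_t^-$, we only need convergence of $P_t^-(\Omega_i)$ for a.e.\ $t$.

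This localized convergence is the main obstacle. We plan to prove it in two steps, using that the Hausdorff continuity of perimeter comes with weak convergence of the measures $\mu_i=\eta_{\Omega_i}\Hau^{n-1}\llcorner\p\Omega_i$ to $\mu=\eta_\Omega\Hau^{n-1}\llcorner\p\Omega$.

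First, for nondegenerate $\Omega$, the measures $\mu_i$ are the total variations $|D\one_{\Omega_i}|$. The functions $\one_{\Omega_i}$ converge to $\one_\Omega$ in $L^1$, and the total masses converge. The standard result that $L^1$ convergence plus convergence of total variation gives weak-$*$ convergence of $|Du_i|$ then yields $\mu_i\rightharpoonup\mu$.

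Second, for degenerate $\Omega\subset H$, we work instead with the projections onto $H$. The part of $\p\Omega_i$ away from the "rim" of $\Omega$ is a union of two Lipschitz graphs over $H$ with small slopes, which gives convergence to $2\Hau^{n-1}\llcorner\Omega$. The rim contributes little mass, because total masses converge. Alternatively, we could use the Cauchy formula $\Hau^{n-1}(\p\Omega\cap U)=\frac{1}{\omega_{n-1}}\int_{S^{n-1}}\Hau^{n-1}(\pi_\nu(\p\Omega\cap U))\dd\nu$, counted with multiplicity.

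Once weak convergence is established, $\mu(\p B_t)=0$ for all but countably many $t$, since $\mu$ is finite. Hence $\mu_i(B_t)\to\mu(B_t)$ for a.e.\ $t$, and dominated convergence concludes the proof.
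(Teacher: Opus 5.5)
Your argument is correct, but the key localized step takes a different route from the paper. The outer structure is the same. The paper also uses a layer-cake formula, with base point $R$ instead of $1$, so only $E_0(\p\Omega;B_t)$ appears and no $P_t^+$ term is needed. It uses the same majorant $\beta_{n-1}|\phi'(t)|t^{n-1}$, and the same continuity of $E_0$, which you quote and the paper reproves via the Cauchy formula. The difference is how $E_0(\p\Omega_i;B_t)\to E_0(\p\Omega;B_t)$ is obtained. The paper proves a relative Cauchy formula
\[
E_0(\p\Omega;B_t)=\frac{1}{\omega_{n-1}}\int_{\mathbb{S}^{n-1}}|\pi_\sigma(\Omega\cap\overline{B}_t)|\dH(\sigma)-|\Omega\cap\p B_t| ,
\]
and treats the two terms separately:
\begin{itemize}
\item the first via Hausdorff convergence of $\Omega_i\cap\overline{B}_t$, or a direct argument when $\Omega\cap B_t=\emptyset$;
\item the second via pointwise convergence of $\one_{\Omega_i}$ on $\p B_t$ off $\p\Omega\cap\p B_t$, which is $\Hau^{n-1}$-null for a.e.\ $t$ by the coarea formula.
\end{itemize}
You instead prove weak convergence of the boundary measures with multiplicity, $\mu_i\rightharpoonup\mu$, and use that $\mu(\p B_t)>0$ for at most countably many $t$. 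This gives more: convergence for all but countably many $t$, and continuity of $\int\phi(|x|)\dd\mu_\Omega$ for every bounded continuous weight. The layer-cake formula is then needed only to handle a possible singularity of $\phi$ at $0$. The cost is BV machinery (Reshetnyak-type continuity of $|D\one_{\Omega_i}|$) plus a separate argument for degenerate limits. The paper stays within elementary convex geometry and handles the factor $2$ uniformly through the Cauchy formula.

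A few points to supply when you write it out.
\begin{itemize}
\item If $\Omega$ has interior, so does $\Omega_i$ for large $i$. Otherwise $\Omega\subset\Omega_i+B_\varepsilon$ would lie in a slab of width $2\varepsilon$. You need this to identify $\mu_i$ with $|D\one_{\Omega_i}|$.
\item In the degenerate case, the small-slope claim holds because the upper and lower boundary functions of $\Omega_i$ are concave and convex, with values in $(-\varepsilon_i,\varepsilon_i)$, on a neighbourhood of a compact $K\subset\mathrm{relint}\,\Omega$. Their Lipschitz constants on $K$ are therefore $O(\varepsilon_i)$.
\item If $\dim\Omega<n-1$, then $\mu=0$ and total-mass convergence alone suffices.
\item Your alternative localized Cauchy formula has the wrong constant. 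Counted with multiplicity, it must carry $\frac{1}{2\omega_{n-1}}$; the factor $\frac{1}{\omega_{n-1}}$ is correct only for the full boundary paired with $|\pi_\nu(\Omega)|$.
\end{itemize}
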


Our proof of Theorem \ref{thm:continuity_energy} 
is based on a convenient one-dimensional representation formula to express $E_\phi$ obtained through a layer-cake decomposition.
 Using this formula it turns out to be sufficient to study the convergence of the standard perimeters of $\Omega_i$ relative to balls and complements of balls. 

We expect that Theorem \ref{thm:continuity_energy} can also be proven using the methods developed in \cite[Section 3]{RitoreVernadikis}, who proved a kind of equivalence between  convergence in Hausdorff metric and convergence in a so-called Lipschitz distance for convex bodies. But our attempts in this direction seem to suggest that this would lead to a longer and more technical proof. 

The second main result is the following monotonicity property.
 In this paper we make the convention that a function $\phi$ is \textit{increasing} if $\phi(t) \leq \phi(s)$ whenever $t < s$, and we will not employ the perhaps more commonly used expression "nondecreasing". 

\begin{Theorem}[Monotonicity of $E_\phi$ under inclusion] \label{thm:monotonicity}
     Let $n\geq 2$, $\phi$  be a weight function according to Definition \ref{intro:gy:weight function} and assume $\phi$ is increasing and $\phi(0) \ge 0$. If $\Omega_1,$ $\Omega_2$ are two convex bodies such that $0 \in \Omega_1 \subset \Omega_2\subset\R^n$, then $E_\phi(\p \Omega_1) \le E_\phi(\p \Omega_2)$.
\end{Theorem}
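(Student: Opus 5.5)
We follow the layer-cake idea announced after Theorem \ref{thm:continuity_energy}. For an increasing absolutely continuous $\phi$ with $\phi(0)\ge 0$ we write, for $x\neq 0$,
\[
\phi(|x|)=\phi(0)+\int_0^{|x|}\phi'(t)\dd t=\phi(0)+\int_0^\infty \phi'(t)\,\one_{\{|x|>t\}}\dd t ,
\]
where $\phi(0)$ is understood as $\lim_{t\to0^+}\phi(t)$. This limit exists and is finite by monotonicity together with \eqref{cond_int_derivative}, or by the discussion at the start of Section \ref{Sec:continuity}. Integrating over $\p\Omega$ and using Tonelli, which is legitimate since $\phi'\ge 0$, we obtain
\[
E_\phi(\p\Omega)=\phi(0)\,P(\Omega)+\int_0^\infty \phi'(t)\,\eta_\Omega\,\Hau^{n-1}\bigl(\p\Omega\setminus \overline{B_t}\bigr)\dd t ,
\]
where $P(\Omega)=\eta_\Omega\Hau^{n-1}(\p\Omega)$ is the standard perimeter with the degenerate convention. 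Since $\phi(0)\ge 0$ and $\phi'\ge 0$, it suffices to prove two inequalities for $0\in\Omega_1\subset\Omega_2$:
\begin{itemize}
\item[(a)] $P(\Omega_1)\le P(\Omega_2)$;
\item[(b)] for every $t>0$, $\eta_{\Omega_1}\Hau^{n-1}(\p\Omega_1\setminus \overline{B_t})\le \eta_{\Omega_2}\Hau^{n-1}(\p\Omega_2\setminus \overline{B_t})$.
\end{itemize}
Inequality (a) is the classical monotonicity of perimeter of convex bodies (Cauchy formula or nearest-point projection), which we may quote.

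For (b) we use the nearest-point projection $\pi:\R^n\to\Omega_1$, which is $1$-Lipschitz and maps $\p\Omega_2$ onto $\p\Omega_1$. In the nondegenerate case each point of $\p\Omega_1$ is hit by the outward normal ray. In the degenerate case $\Omega_1$ is hit from both sides, which is exactly what $\eta_{\Omega_1}=2$ accounts for; equivalently, we count the preimages with multiplicity. The key geometric point is that if $y\in\p\Omega_1$ and $x\in\p\Omega_2$ satisfies $\pi(x)=y$, then $x=y+s\nu$ with $s\ge0$ and $\nu$ an outer normal of $\Omega_1$ at $y$. Since $0\in\Omega_1$, we have $\langle y,\nu\rangle\ge 0$, so
\[
|x|^2=|y|^2+2s\langle y,\nu\rangle+s^2\ge|y|^2 .
\]
Hence $\pi$ maps $\p\Omega_2\setminus\overline{B_t}$ onto a set containing $\p\Omega_1\setminus\overline{B_t}$. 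This is precisely where $0\in\Omega_1$ enters. Applying the area formula to the Lipschitz map $\pi$, whose Jacobian on $\p\Omega_2$ is at most $1$, gives
\[
\eta_{\Omega_1}\Hau^{n-1}\bigl(\p\Omega_1\setminus\overline{B_t}\bigr)\le \eta_{\Omega_2}\Hau^{n-1}\bigl(\p\Omega_2\setminus\overline{B_t}\bigr).
\]
For surjectivity we argue as follows: for $y\in\p\Omega_1$ with outer normal $\nu$, the ray $y+s\nu$ exits $\Omega_2$ at some $x\in\p\Omega_2$, and $\pi(x)=y$. In the degenerate case both normals $\pm\nu$ give two distinct preimages, provided $\Omega_2$ is nondegenerate. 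If $\Omega_2$ is also degenerate, then both bodies lie in the same hyperplane, and we apply the same argument inside that hyperplane with factor $2$ on both sides.

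The main obstacle is making the counting in (b) rigorous, namely the multiplicity-two statement for degenerate $\Omega_1$ together with the area-formula inequality $\Hau^{n-1}(\pi(A))\le\Hau^{n-1}(A)$. The issue in the degenerate case is that $\pi$ restricted to the part of $\p\Omega_2$ on each side of the hyperplane must be handled separately. We would split $\p\Omega_2$ into the two closed half-space pieces $\p\Omega_2\cap\{\pm\langle x,e\rangle\ge 0\}$, each of which projects onto $\Omega_1\setminus\overline{B_t}$ through the $1$-Lipschitz map, and then sum the two estimates. Points of $\p\Omega_1$ on its relative boundary where normals are nonunique form a null set for the relevant count, or are handled in the same way. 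Finally, the $t$-integral identity requires $\phi'\ge0$ only for Tonelli, so no further integrability issue arises beyond Definition \ref{intro:gy:weight function}.
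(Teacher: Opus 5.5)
Your route is the paper's: formula \eqref{eq:formula_increasing}, monotonicity of $E_0$ for the $\phi(0)$ term, and the nearest-point projection $\pi$ onto $\Omega_1$, where $0\in\Omega_1$ gives $|x|\ge|\pi(x)|$ and hence $\p\Omega_1\setminus\overline{B_t}\subset\pi(\p\Omega_2\setminus\overline{B_t})$. When both bodies are nondegenerate your argument is complete. The step that fails as written is the bookkeeping when $\Omega_1\subset H=\{z_n=0\}$ is degenerate and $\Omega_2$ is not. Summing the two $1$-Lipschitz estimates over the \emph{closed} pieces $\p\Omega_2\cap\{\pm z_n\ge0\}$ only gives $2\Hau^{n-1}(\Omega_1\setminus\overline{B_t})\le\Hau^{n-1}(\p\Omega_2\setminus\overline{B_t})+\Hau^{n-1}(\p\Omega_2\cap H\setminus\overline{B_t})$, and the last term need not vanish. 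If $\Omega_2$ lies on one side of $H$, then $\Omega_2\cap H\supset\Omega_1$ is a facet of $\Omega_2$ and belongs to both pieces. For the flat cone $\Omega_2=\operatorname{conv}(\Omega_1\cup\{\varepsilon e_n\})$ the overlap is the whole base $\Omega_1$, while (b) is nearly an equality there, so the extra term cannot be absorbed.

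To repair it, use disjoint pieces, as the paper does. Pick a side where $\Omega_2$ has a point $y^\star$, say $y^\star_n>0$. Take the open piece $\p\Omega_2\cap\{z_n>0\}$ on that side and the closed piece $\p\Omega_2\cap\{z_n\le0\}$ on the other. The $-e_n$ normal ray from $x\in\Omega_1$ always exits $\Omega_2$ in $\{z_n\le0\}$. What you must check is that the $+e_n$ ray exits strictly above $H$. This holds for every $x$ in the relative interior of $\Omega_1$ (assume $\dim\Omega_1=n-1$, otherwise there is nothing to prove), because $\operatorname{conv}(\Omega_1\cup\{y^\star\})\subset\Omega_2$ contains $x+se_n$ for small $s>0$. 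The relative boundary of $\Omega_1$ is $\Hau^{n-1}$-null, so each piece covers $\Omega_1\setminus\overline{B_t}$ under $\pi$ up to a null set, and adding the two estimates gives (b). Equivalently, your remark about counting preimages with multiplicity becomes a proof via the area formula with multiplicity on $A=\p\Omega_2\setminus\overline{B_t}$; the same relative-interior argument supplies the two distinct preimages.

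Two smaller slips. First, when $\Omega_2$ is also degenerate no projection is needed: $\p\Omega_i=\Omega_i$ and $\Omega_1\subset\Omega_2$ give (b) at once. Running your argument inside the hyperplane would compare $(n-2)$-dimensional relative boundaries, which is the wrong quantity. Second, finiteness of $\phi(0)$ comes from monotonicity together with the hypothesis $\phi(0)\ge0$, not from \eqref{cond_int_derivative}: the increasing weight $\phi(t)=\log t$ satisfies \eqref{cond_int_derivative} but has $\phi(0)=-\infty$.
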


Note that $\phi(0)$ is understood as $\lim_{t \to 0^+} \phi(t)$ which is well defined since $\phi$ is increasing.

In the beginning of Section \ref{section:monotonicity} we will exhibit examples to show the optimality of this monotonicity theorem. In particular monotonicity fails if any of the hypotheses of the theorem is removed. 
Firstly, if $\phi$ is a nonnegative decreasing function one cannot expect  $E_\phi(\p \Omega_1) \ge E_\phi(\p \Omega_2)$ to hold in general.
Also the conditions $\phi(0)\geq 0$ and $0\in\Omega_1$ are necessary.

In Section \ref{Sec:existence} we will show that this monotonicity theorem provides a simplification of the proof of existence of minimizers presented in \cite{CGR}, where no monotonicity of $\phi$ was assumed. 

\subsection{Notation.}

Throughout the paper, $B_r$ denotes the $n$-dimensional open ball in $\R^n$ of radius $r$ and center $0$. $\mathcal{H}^{n-1}$ will denote the $(n-1)$-dimensional Hausdorff measure. For a set $A$ we will abbreviate this measure by $|A|:=\mathcal{H}^{n-1}(A)$ and we denote $\beta_{n-1}=|\p B_1|.$ 

The standard perimeter of a convex body $\Omega$ with multiplicity will be denoted by $E_0(\p\Omega)$ in accordance with the definition of $E_p$ in \eqref{intro:def:Ep}. 
Specifically, $E_0(\p \Omega)$ is equal to $|\p \Omega|$ if $\Omega$ is nondegenerate, and to $2|\p \Omega|$ if $\Omega$ is degenerate.

In addition, it is convenient to introduce the following notation for the relative perimeter of a convex body $\Omega$.
	For any set $A \subset \R^n$ we define
	\begin{equation} \label{eq:notation_relative_energy}
		E_0(\p \Omega;A):= \begin{cases}
			|\p \Omega \cap A| &\text{if $\Omega$ is nondegenerate}, \\
			2|\p \Omega \cap A| &\text{if $\Omega$ is degenerate}.
		\end{cases}
	\end{equation}
Finally, $A^c$ denotes the complement $\R^n\setminus A$ of a set $A.$

\section{Continuity of the weighted perimeter} \label{Sec:continuity}

The aim of this section is to prove Theorem \ref{thm:continuity_energy}. Before doing so, let us show that indeed the condition \eqref{cond_int_derivative} on the weight function ensures the local integrability  condition \eqref{eq:integrability_phi} on the weight.  
Since $\phi$ is absolutely continuous on $(0,\infty)$, for any $t \in (0,1]$ we can bound $|\phi(t)| \le |\phi(1)| + \int_t^1 |\phi'(s)| \dd s$. Multiplying by $t^{n-2}$, integrating over $(0,1)$, and applying Tonelli's theorem to swap the order of integration yields
\begin{align*}
    \int_0^1 |\phi(t)| t^{n-2} \dd t &\le \int_0^1 |\phi(1)| t^{n-2} \dd t + \int_0^1 |\phi'(s)| \left( \int_0^s t^{n-2} \dd t \right) \dd s \\
    &= \frac{|\phi(1)|}{n-1} + \frac{1}{n-1} \int_0^1 |\phi'(s)| s^{n-1} \dd s,
\end{align*}
which is finite thanks to \eqref{cond_int_derivative}. 

Conversely, let us assume that $\phi'$ has a constant sign almost everywhere on $(0, 1)$. We can show that the local integrability condition \eqref{eq:integrability_phi} implies \eqref{cond_int_derivative}.
Using Tonelli's theorem as above, we can rewrite the integral of the derivative as
\begin{align*} 
    \int_0^1 \lvert \phi'(s) \rvert s^{n-1} \dd s 
    &= (n-1) \int_0^1 t^{n-2} \left( \int_t^1 \lvert \phi'(s) \rvert \dd s \right) \dd t. 
\end{align*}
If $\phi' \ge 0$ almost everywhere on $(0, 1)$, we have $\int_t^1 \lvert \phi'(s) \rvert \dd s = \phi(1) - \phi(t)$. Substituting this into the previous identity yields
\begin{align*} 
    \int_0^1 \lvert \phi'(s) \rvert s^{n-1} \dd s &= (n-1) \int_0^1 t^{n-2} (\phi(1) - \phi(t)) \dd t = \phi(1) - (n-1) \int_0^1 \phi(t) t^{n-2} \dd t. 
\end{align*}
Thanks to \eqref{eq:integrability_phi}, the right-hand side is finite. If $\phi'\leq 0$ the previous identity has to be multiplied by $-1.$ 

Let us also remark that \eqref{eq:integrability_phi} and \eqref{cond_int_derivative} are not equivalent if $\phi'$ changes sign. An example of a weight that satisfies \eqref{eq:integrability_phi} but not \eqref{cond_int_derivative} is $\phi(t)=\cos(\frac{1}{t^2})$ in the case $n=3.$

We begin by expressing conveniently $E_\phi(\p \Omega)$ via a layer-cake decomposition.

\begin{Lemma}\label{lm:rep_formula} Assume $\phi$ is a weight function according to Definition \ref{intro:gy:weight function}.
Let $\Omega$ be a convex body in $\R^n$. Then for every $R>0$ such that $\Omega\subset B_R$ the following formula holds
\begin{equation} \label{eq:layer_cake_formula}
    E_\phi(\p \Omega) = \phi(R) E_0(\p \Omega) - \int_0^R \phi'(t) E_0(\p \Omega; B_t) \dd t.
\end{equation}
If moreover $\phi' \in L^1([0,1])$, the following formula holds
\begin{align} \label{eq:formula_increasing}
   E_{\phi}(\p\Omega)&=\phi(0) \, E_0(\p \Omega) + \int_0^\infty \phi'(t) E_0(\p \Omega; B_t^c) \dd t.
\end{align}
In particular \eqref{eq:formula_increasing} holds true if $\phi$ is monotone and $\phi(0)=\lim_{t\searrow 0}\phi(t) \in \R$.
\end{Lemma}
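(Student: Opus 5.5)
The plan is to write $\phi(|x|)$ pointwise as an integral of $\phi'$ and then swap the order of integration with Fubini/Tonelli against the measure $\mu:=\eta_\Omega\,\mathcal{H}^{n-1}\llcorner\partial\Omega$. This is a finite Radon measure, since convex bodies have finite perimeter, and it satisfies $\mu(A)=E_0(\partial\Omega;A)$. Fix $R$ with $\Omega\subset B_R$. For $x\in\partial\Omega$ with $x\neq 0$ we have $|x|<R$, and absolute continuity of $\phi$ on $[|x|,R]$ gives
\begin{equation*}
\phi(|x|)=\phi(R)-\int_0^R \phi'(t)\,\one_{\{|x|<t\}}\dd t .
\end{equation*}
The single point $x=0$ has $\mu$-measure zero when $n\ge 2$, so it can be ignored. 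Integrating this identity against $\mu$ and interchanging the integrals, the inner integral becomes $\int \one_{\{|x|<t\}}\dd\mu=\mu(B_t)=E_0(\partial\Omega;B_t)$. This yields \eqref{eq:layer_cake_formula}.

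The main obstacle is justifying Fubini, which requires
\begin{equation*}
\int_0^R|\phi'(t)|\,\mu(B_t)\dd t<\infty .
\end{equation*}
The key geometric input is the bound $\mu(B_t)\le C\,t^{n-1}$ with $C$ independent of $\Omega$, for instance $C=2\beta_{n-1}$. The argument goes as follows. $\partial\Omega\cap B_t$ is contained in $\partial(\Omega\cap \overline B_t)$, and $\Omega\cap\overline B_t$ is a convex set contained in $\overline B_t$. By monotonicity of perimeter under inclusion for convex sets (the classical Cauchy formula/projection argument), $|\partial(\Omega\cap\overline B_t)|\le|\partial B_t|=\beta_{n-1}t^{n-1}$. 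The factor $\eta_\Omega\le 2$ covers the degenerate case, where one instead compares with the flat $(n-1)$-disk. Given this bound, on $(0,1)$ the integral is controlled by \eqref{cond_int_derivative}. On $[1,R]$ it is controlled by $\mu(\R^n)\int_1^R|\phi'|<\infty$, which holds because $\phi$ is absolutely continuous on $[1,R]$. Note that this also shows $E_\phi(\partial\Omega)$ is finite.

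For \eqref{eq:formula_increasing}, assume $\phi'\in L^1(0,1)$. Then $\phi(0):=\lim_{t\to0}\phi(t)$ exists, and $\phi(|x|)=\phi(0)+\int_0^\infty\phi'(t)\one_{\{t<|x|\}}\dd t$, where $\phi'$ is integrable on $(0,|x|)$. Integrating against $\mu$ gives $\mu(B_t^c)=E_0(\partial\Omega;B_t^c)$ as the inner integral. This inner integral vanishes for $t\ge R$ and is bounded by $\mu(\R^n)$, so the interchange is justified by $\int_0^R|\phi'|<\infty$. Alternatively, one can derive \eqref{eq:formula_increasing} from \eqref{eq:layer_cake_formula} by substituting $\mu(B_t)=\mu(\R^n)-\mu(B_t^c)$ for $t\le R$ (using $\mu(\partial B_t)=0$ for a.e. $t$, or working with the closed balls consistently) and using $\phi(R)-\int_0^R\phi'=\phi(0)$.

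Finally, suppose $\phi$ is monotone with finite limit $\phi(0)$. Then $\phi'$ has a constant sign, and $\int_0^1|\phi'|=|\phi(1)-\phi(0)|<\infty$ by absolute continuity on $[\epsilon,1]$ together with monotone convergence as $\epsilon\to0$. So the last claim reduces to the previous case.
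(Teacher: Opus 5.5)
Your proposal is correct and follows essentially the same route as the paper. Both arguments apply the fundamental theorem of calculus to $\phi(|x|)$ and justify Tonelli through the bound $E_0(\partial\Omega;B_t)\lesssim t^{n-1}$, which comes from monotonicity of perimeter under inclusion; the second formula is then obtained from the first via $E_0(\partial\Omega;B_t)=E_0(\partial\Omega)-E_0(\partial\Omega;B_t^c)$, which is an exact identity, so no a.e.\ argument is needed on that route. You are in fact slightly more careful than the paper in two places: you discard $x=0$, and you split $\int_0^R|\phi'(t)|t^{n-1}\,dt$ into $(0,1)$ and $[1,R]$, handling the latter by local absolute continuity. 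The only nit is in your direct route, where the inner integral is $\mu(\{|x|>t\})$ rather than $\mu(B_t^c)$; the two agree for all but countably many $t$, so this is harmless.
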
 

As an example, if  $\phi(t)=t^p$ with $p>0$, \eqref{eq:formula_increasing} takes the form $
	 E_{p}(\p\Omega) = \int_0^\infty pt^{p-1} E_0( \p \Omega; B_t^c) \dd t.$
\smallskip

\begin{proof}
    By means of the fundamental theorem of calculus, we express $\phi(|x|)$ for a.e. $x \in \p \Omega$ as 
    \[
    \phi(|x|) = \phi(R) - \int_{|x|}^R \phi'(t) \dd t = \phi(R) - \int_0^R \phi'(t) \one_{(|x|,\infty)}(t) \dd t\quad\text{ if }|x|<R.
    \]
    Let us assume for now that $\Omega$ is a nondegenerate convex body. Plugging the expression above in $E_\phi(\p \Omega)$ yields
    \begin{equation} \label{eq:intermediate}
    \begin{aligned}
        E_\phi(\p \Omega) &= \phi(R)E_0(\p \Omega) - \int_{\p \Omega}  \int_0^R \phi'(t) \one_{(|x|,\infty)}(t)\dd t \dH(x).
    \end{aligned}
    \end{equation}
    To apply Fubini-Tonelli theorem we check the integrability condition. By monotonicity of the perimeter under inclusion for convex bodies (see also \eqref{eq:monoton incl with multipl}) we know that $E_0(\p \Omega; B_t) \le |\p B_t|=\beta_{n-1}t^{n-1}$ for all $t>0$. Thus,
    \begin{equation} \label{necessity_cond}
    \begin{aligned}
        \int_0^R |\phi'(t)| \left( \int_{\p \Omega} \one_{(|x|,\infty)}(t)\dH(x) \right) \dd t &= \int_0^R |\phi'(t)| E_0(\p \Omega; B_t) \dd t \\ & \le \beta_{n-1} \int_0^R |\phi'(t)| t^{n-1} \dd t < \infty.
    \end{aligned}
    \end{equation}
    Notice that the last term on the right-hand side above is finite by \eqref{cond_int_derivative}. Therefore, exchanging the order of integration, and recalling the notation \eqref{eq:notation_relative_energy}, we obtain
    \begin{equation*}
        E_\phi(\p \Omega) = \phi(R) E_0(\p \Omega) - \int_0^R \phi'(t) E_0(\p \Omega; B_t) \dd t.
    \end{equation*}
    
    If $\Omega$ is degenerate, a factor $2$ has to be added in front of both the second term on the right-hand side in \eqref{eq:intermediate} and  in the left-hand side of \eqref{necessity_cond}, and the conclusion \eqref{eq:layer_cake_formula} remains unchanged.

    To prove the remaining formulas, since $R>0$ is such that $\Omega \subset B_R$,  for all $t \ge R$, we have $E_0(\p \Omega; B_t) = E_0(\p \Omega)$ and $E_0(\p \Omega; B_t^c) = 0$.

  Using the identity $E_0(\p \Omega; B_t) = E_0(\p \Omega) - E_0(\p \Omega; B_t^c)$ 
    in \eqref{eq:layer_cake_formula} and since $\phi' \in L^1([0,1])$, we get
    \begin{align*}
        E_\phi(\p \Omega) &= \phi(R) E_0(\p \Omega) - \int_0^R \phi'(t) \left( E_0(\p \Omega) - E_0(\p \Omega; B_t^c) \right) \dd t \\
        &= \phi(R) E_0(\p \Omega) - E_0(\p \Omega) \int_0^R \phi'(t) \dd t + \int_0^R \phi'(t) E_0(\p \Omega; B_t^c) \dd t.
    \end{align*}
   By the fundamental theorem of calculus $\int_0^R \phi'(t) \dd t = \phi(R) - \phi(0)$.  Substituting this back, the first two terms simplify to $\phi(0)E_0(\p\Omega)$. Furthermore, since $E_0(\p \Omega; B_t^c) = 0$ for all $t \ge R$, we can extend the integration domain of the last integral to $[0, \infty)$, yielding \eqref{eq:formula_increasing}.

\end{proof}

Lemma \ref{lm:rep_formula} exhibits the connection between the continuity of $E_\phi$ and that of the standard perimeter, respectively that of  the standard perimeter  relative to a ball.

	In the next lemma we recall the continuity of the standard perimeter functional for convex bodies with respect to the Hausdorff metric. This property of the perimeter functional can be found for instance  in \cite[Result 3.2.36 page 272]{Federer} or in \cite{Bucur}. The proof below is essentially the same as the one of \cite{Federer}. We repeat it here for completeness and to emphasize that it is here where the factor $2$ in the definition of $E_0$ for degenerate sets enters the picture. Moreover, some  ingredients of the proof, namely \eqref{eq:gy:cauchy surface general} and \eqref{eq:monoton incl with multipl}, will be required later. 
    

	\begin{Lemma}[Continuity of the perimeter functional] \label{lm:continuity_perimeter}
		Let $(\Omega_i)_{i \geq 1}$ be a  sequence of convex bodies. Assume that $\Omega_i$ converges to  $\Omega$ in the Hausdorff metric as $i \to \infty$. Then 
		\[
		\lim_{i \to \infty} E_0(\p \Omega_i) = E_{0}(\p \Omega).
		\]
	\end{Lemma}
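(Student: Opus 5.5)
The plan is to follow Federer's approach: use the Cauchy surface area formula to write the perimeter as an average of projection volumes, and then pass to the limit inside the average by dominated convergence. The hints in the text (\eqref{eq:gy:cauchy surface general} and \eqref{eq:monoton incl with multipl}) suggest the paper does the same.

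\textbf{Step 1: Cauchy's formula with multiplicity.} For every convex body $K$, degenerate or not, we claim
\[
E_0(\p K)=\frac{1}{\omega_{n-1}}\int_{\p B_1}\mathcal{H}^{n-1}(\pi_{\nu}(K))\dH(\nu),
\]
where $\pi_\nu$ is the orthogonal projection onto $\nu^\perp$ and $\omega_{n-1}$ is the volume of the unit ball in $\R^{n-1}$.
\begin{itemize}
\item \emph{Nondegenerate $K$.} For a.e. direction $\nu$, almost every line parallel to $\nu$ that meets the interior of $K$ crosses $\p K$ exactly twice. Hence the area formula gives $\int_{\p K}|\langle \nu_K(x),\nu\rangle|\dH(x)=2\,\mathcal{H}^{n-1}(\pi_\nu K)$. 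Averaging over $\nu$, and using $\int_{\p B_1}|\langle e,\nu\rangle|\dH(\nu)=2\omega_{n-1}$, yields the formula.
\item \emph{Degenerate $K\subset e^\perp$.} Here $\mathcal{H}^{n-1}(\pi_\nu K)=|\langle e,\nu\rangle|\,|K|$. Averaging gives $2|K|=2|\p K|$, which is exactly $E_0(\p K)$. This is where the factor $\eta_\Omega=2$ enters.
\end{itemize}

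\textbf{Step 2: monotonicity as a by-product.} Since $K_1\subset K_2$ implies $\pi_\nu K_1\subset \pi_\nu K_2$, we get $E_0(\p K_1)\le E_0(\p K_2)$. This is \eqref{eq:monoton incl with multipl}, the estimate already used in Lemma \ref{lm:rep_formula}.

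\textbf{Step 3: convergence of projections.} Suppose $\Omega_i\to\Omega$ in $d_H$. Projections are $1$-Lipschitz, so $\pi_\nu\Omega_i\to\pi_\nu\Omega$ in the Hausdorff metric of $\nu^\perp\cong\R^{n-1}$. We then need continuity of the $(n-1)$-dimensional Lebesgue measure along Hausdorff-convergent sequences of compact convex sets in $\R^{n-1}$.
\begin{itemize}
\item \emph{Upper semicontinuity.} Let $\epsilon_i=d_H(\Omega_i,\Omega)$. Then $\pi_\nu\Omega_i\subset \pi_\nu\Omega+B_{\epsilon_i}$, and the measure of $\pi_\nu\Omega+B_{\epsilon_i}$ decreases to the measure of the closed set $\pi_\nu\Omega$.
\item \emph{Lower bound, case $\pi_\nu\Omega$ has interior.} Translate so that a small ball $B_\rho\subset\pi_\nu\Omega$ is centered at the origin. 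For large $i$, a separation argument shows $(1-\epsilon_i/\rho)\pi_\nu\Omega\subset\pi_\nu\Omega_i$, which gives the lower bound.
\item \emph{Lower bound, case $\pi_\nu\Omega$ has no interior.} Its measure is $0$, so the lower bound is trivial.
\end{itemize}
Hence $\mathcal{H}^{n-1}(\pi_\nu\Omega_i)\to\mathcal{H}^{n-1}(\pi_\nu\Omega)$ for every $\nu$.

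\textbf{Step 4: dominated convergence.} All $\Omega_i$ lie in a fixed ball $B_R$, so the integrands are bounded by $\omega_{n-1}R^{n-1}$. Dominated convergence in the formula of Step 1 gives $E_0(\p\Omega_i)\to E_0(\p\Omega)$.

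\textbf{Main obstacle.} I expect the hardest part to be Step 1 in the nondegenerate case: justifying the two-point intersection property for almost every line, and the area/coarea computation on the merely Lipschitz surface $\p K$. I would handle it by writing $\p K$ locally as the union of two graphs, an upper and a lower one, of concave and convex Lipschitz functions over the open set $\mathrm{int}\,\pi_\nu K$. The lateral part of $\p K$ projects onto $\p(\pi_\nu K)$, which is $\mathcal{H}^{n-1}$-null because it is the boundary of a convex set. It therefore does not contribute.
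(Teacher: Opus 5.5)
Your proposal is correct and follows the paper's proof. Both use the Cauchy formula with multiplicity, the pointwise convergence $\mathcal{H}^{n-1}(\pi_\nu\Omega_i)\to\mathcal{H}^{n-1}(\pi_\nu\Omega)$ obtained from $d_H(\pi_\nu\Omega_i,\pi_\nu\Omega)\le d_H(\Omega_i,\Omega)$, and dominated convergence with the bound $\omega_{n-1}R^{n-1}$. The differences are minor: you prove directly the two ingredients the paper cites (Cauchy's formula via the area formula, and continuity of volume under Hausdorff convergence, which the paper takes from Schneider). You also get the degenerate case from the identity $\mathcal{H}^{n-1}(\pi_\nu K)=|\langle e,\nu\rangle|\,|K|$, whereas the paper approximates by the slabs $K_h$ and lets $h\to 0$.
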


	Given $\sigma$ in the unit sphere $ \mathbb{S}^{n-1}=\p B_1$, we denote by $\pi_\sigma(K)$ the projection of $K$ onto the hyperplane $H_\sigma$ orthogonal to $\sigma$. For a bounded convex body $K$ with nonempty interior we recall the Cauchy surface area formula (see for instance \cite[Lemma A2]{cabre2022fractional} for a short proof for convex polytopes) which states that
	\begin{equation}\label{eq:gy:cauchy surface for open}
		|\partial K| = \frac{1}{\omega_{n-1}}\int_{\mathbb{S}^{n-1}} |\pi_{\sigma}(K)| \dH(\sigma),
	\end{equation}
where $\omega_{n-1}$ denotes the $(n-1)$-dimensional Lebesgue measure of the unit ball in $\R^{n-1}.$
This formula easily extends to degenerate convex bodies after replacing $|\partial K|$ by $E_0(\p K)=2|\p K|$ on the left hand side as we shall now explain. 
    Indeed if $K$ is degenerate, then after rotation we can assume that $K=K'\times\{0\}$ for some convex body $K'\subset\R^{n-1}.$ Then apply \eqref{eq:gy:cauchy surface for open} to the sets
	$$
	K_h:=\{(x',x_n)\in\R^n;\, x'\in K',\quad x_n\in [-h,h]\},\qquad h>0
	$$
	and take the limit $h\to 0.$ Clearly, on the left hand side of \eqref{eq:gy:cauchy surface for open} we obtain that $|\partial K_h|\to E_0(\p K)$ and on the right hand side we can use dominated convergence theorem and the fact that
	$$
	|\pi_{\sigma}(K)|\leq |\pi_{\sigma}(K_h)|\leq |\pi_{\sigma}(K)|+2h\mathcal{H}^{n-2}(\p K'),
	$$
	to obtain
	\begin{equation}\label{eq:gy:cauchy surface general}
		E_0( \p K) = \frac{1}{\omega_{n-1}}\int_{\mathbb{S}^{n-1}} |\pi_{\sigma}(K)| \dH(\sigma)\quad\text{ for every convex body }K\subset\R^n.
	\end{equation}
The identity \eqref{eq:gy:cauchy surface general} yields the monotonicity of the standard perimeter with respect to inclusion in the following way. If $K_1\subset K_2$ then $\pi_{\sigma}(K_1)\subset\pi_{\sigma}(K_2)$ for all $\sigma$ and thus
\begin{equation}
\label{eq:monoton incl with multipl}
  E_0(\p K_1)\leq E_0(\p K_2).
\end{equation}
Note that \eqref{eq:monoton incl with multipl} is more precise than the usual statement in the literature $|\p K_1|\leq |\p K_2|$. 
	
	We are now ready to provide the proof of Lemma \ref{lm:continuity_perimeter}.

\begin{customproof}{Lemma \ref{lm:continuity_perimeter}}
	
		We claim that for every $\sigma \in \mathbb{S}^{n-1}$, we have
		\begin{equation} \label{eq:claim_peri}
			|\pi_{\sigma}(\Omega_i)| \to |\pi_{\sigma}(\Omega)| \quad \text{as } i \to \infty.
		\end{equation}
		Assuming this claim holds true, we can  readily conclude the proof by applying the dominated convergence theorem to the formula \eqref{eq:gy:cauchy surface general}.
		Indeed, since the sequence $(\Omega_i)_{i \geq 1}$ is bounded, there exists $R > 0$ such that $\Omega_i \subset B_R$ for all $i \geq 1$. Consequently, for any direction $\sigma \in \mathbb{S}^{n-1}$, the projection $\pi_\sigma(\Omega_i)$ is contained in the projection of the $n$-dimensional ball $B_R$, which is an $(n-1)$-dimensional ball of radius $R$. This provides the uniform bound
		\[
		|\pi_{\sigma}(\Omega_i)| \leq |\pi_\sigma(B_R)| = \omega_{n-1} R^{n-1}.
		\]
		Dominated convergence theorem allows us to pass the limit inside the integral, yielding
		\[
		\lim_{i \to \infty} E_0(\partial \Omega_i) = \lim_{i \to \infty} \frac{1}{\omega_{n-1}} \int_{\mathbb{S}^{n-1}} |\pi_{\sigma}(\Omega_i)| \dH(\sigma) = \frac{1}{\omega_{n-1}} \int_{\mathbb{S}^{n-1}} |\pi_{\sigma}(\Omega)| \dH(\sigma) = E_0(\partial \Omega).
		\]
		
We now turn to the proof of \eqref{eq:claim_peri}.
In view of the assumed Hausdorff convergence of $\Omega_i$, for every $\varepsilon > 0$ there exists an $i_0$  such that
		\[
		\Omega_i \subseteq \Omega +  B_\varepsilon \quad \text{and} \quad \Omega \subseteq \Omega_i +  B_\varepsilon \qquad \text{for any }i\geq i_0.
		\]
		Exploiting the linearity of the projection, we apply $\pi_{\sigma}$ to both inclusions
		\[
		\pi_{\sigma}(\Omega_i) \subseteq \pi_{\sigma}(\Omega) +  \pi_{\sigma}(B_\varepsilon) \quad \text{and} \quad \pi_{\sigma}(\Omega) \subseteq \pi_{\sigma}(\Omega_i) + \pi_{\sigma}(B_\varepsilon)
		\]
		Since $\pi_{\sigma}(B_1) = B_1^{n-1}$ (the $(n-1)$-dimensional unit ball in the hyperplane $H_\sigma$), we obtain
		\[
		\pi_{\sigma}(\Omega_i) \subseteq \pi_{\sigma}(\Omega) +  B_\varepsilon^{n-1} \quad \text{and} \quad \pi_{\sigma}(\Omega) \subseteq \pi_{\sigma}(\Omega_i) +  B_\varepsilon^{n-1}.
		\]
		This implies that the Hausdorff distance between the projections is bounded by that of the original sets
		\[
		d_H(\pi_{\sigma}(\Omega), \pi_{\sigma}(\Omega_i)) \le d_H(\Omega, \Omega_i) < \varepsilon, \qquad \text{for every }i\geq i_0.
		\]
		Thus $\pi_{\sigma}(\Omega_i) \to \pi_{\sigma}(\Omega)$ in the Hausdorff distance. Since these projections are convex bodies in $\R^{n-1}$, and the $(n-1)$-dimensional Lebesgue measure is continuous with respect to the Hausdorff metric on the class of convex bodies in $\R^{n-1}$ (see, for instance, \cite[Theorem 1.8.20]{Schneider}), it follows that $|\pi_{\sigma}(\Omega_i)| \to |\pi_{\sigma}(\Omega)|$.
	\end{customproof}

We now prove an extension of the Cauchy surface area formula \eqref{eq:gy:cauchy surface general} with multiplicity to the perimeter of a convex body relative to a ball. Recall our notation of the perimeter relative to a set given in \eqref{eq:notation_relative_energy}.

\begin{Lemma}
\label{Lemma:Cauhy formula relative a ball}
Let $\Omega\subset\R^n$ be a convex body. Then 
$$
  E_0(\delomega;B_t)=\frac{1}{\omega_{n-1}}\int_{\mathbb{S}^{n-1}}|\pi_{\sigma}(\Omega\cap\overline{B}_t)|\dH(\sigma)-|\Omega\cap\partial B_t|\quad\text{ for every $t>0$}.
$$
\end{Lemma}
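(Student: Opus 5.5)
The plan is to apply the general Cauchy formula \eqref{eq:gy:cauchy surface general} to the convex body $K:=\Omega\cap\overline{B}_t$, which is compact and convex (it may be empty, a point, degenerate or nondegenerate), and then to identify $E_0(\p K)$ with $E_0(\p\Omega;B_t)+|\Omega\cap\p B_t|$. If $\Omega\cap\overline B_t=\emptyset$ both sides vanish trivially, so we assume $K\neq\emptyset$. Formula \eqref{eq:gy:cauchy surface general} then gives
$$
E_0(\p K)=\frac{1}{\omega_{n-1}}\int_{\mathbb{S}^{n-1}}|\pi_\sigma(\Omega\cap\overline B_t)|\dH(\sigma),
$$
so everything reduces to the identity
\begin{equation*}
E_0(\p K)=E_0(\p\Omega;B_t)+|\Omega\cap\p B_t|.
\end{equation*}

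\emph{Nondegenerate $\Omega$.} First suppose $K$ has nonempty interior. We decompose $\p K$ into the disjoint pieces $\p K\cap B_t$ and $\p K\cap \p B_t$.
\begin{itemize}
\item Inside the open ball we have $\p K\cap B_t=\p\Omega\cap B_t$, which gives $|\p\Omega\cap B_t|$.
\item On the sphere we have $\p K\cap\p B_t=\Omega\cap\p B_t$, since every point of $K$ lying on $\p B_t$ is a boundary point of $K$.
\end{itemize}
Adding the two pieces yields the identity. If instead $K$ has empty interior while $\Omega$ has nonempty interior, then $K$ lies in a hyperplane and $\Omega\cap B_t$ has empty interior. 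An open subset of the convex body $\Omega$ with empty interior must be empty, so $\Omega\cap B_t=\emptyset$. Hence $K=\Omega\cap\p B_t$, and this set is contained in a hyperplane and in a sphere. A convex subset of a sphere contains no segment, so $K$ is a single point. Both sides are then $0$: the left side because $\mathcal{H}^{n-1}$ of a point vanishes, and the right side because the first term is $0$ and the second is the measure of a point.

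\emph{Degenerate $\Omega$.} Now $\Omega=\p\Omega$ lies in a hyperplane $H$, and $K=\Omega\cap\overline B_t$ is degenerate, so $E_0(\p K)=2|K|$. On the other hand $E_0(\p\Omega;B_t)=2|\Omega\cap B_t|$. We have $K=(\Omega\cap B_t)\cup(\Omega\cap\p B_t)$, and $\Omega\cap\p B_t\subset H\cap\p B_t$, which is an $(n-2)$-sphere (or a point, or empty), so it is $\mathcal{H}^{n-1}$-null. Therefore $2|K|=2|\Omega\cap B_t|=E_0(\p\Omega;B_t)$, and also $|\Omega\cap\p B_t|=0$, so the identity holds.

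The main obstacle is the nondegenerate case: making rigorous that $\p K\cap B_t=\p\Omega\cap B_t$ and that $\p K\cap\p B_t=\Omega\cap\p B_t$, together with the edge cases where $K$ is lower dimensional. The first equality is local, since $K$ and $\Omega$ coincide in the open set $B_t$. The second holds because points of $K$ on the sphere cannot be interior points of $K\subset\overline B_t$. The degenerate bookkeeping with the factor $2$ and null sets should be routine.
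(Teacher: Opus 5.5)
Your proposal is correct and follows essentially the same route as the paper: you apply the Cauchy formula \eqref{eq:gy:cauchy surface general} to $K=\Omega\cap\overline{B}_t$ and then verify the splitting $E_0(\p K)=E_0(\p\Omega;B_t)+|\Omega\cap\p B_t|$ separately for degenerate and nondegenerate $\Omega$. The differences are only cosmetic: the paper uses the identity $\p(C\cap D)=(\p C\cap D)\cup(C\cap\p D)$ and a separating hyperplane to exclude a lower-dimensional $K$, whereas you split $\p K$ using $K\subset\overline{B}_t$ and use that $\Omega$ is the closure of its interior (so your ``open subset'' should read \emph{relatively} open in $\Omega$).
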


\begin{proof}
If $\Omega\cap\overline{B_t}$ is empty or consists of a single point, the conclusion is immediate. We may therefore assume otherwise.

We apply \eqref{eq:gy:cauchy surface general} to the convex body $\Omega\cap\overline{B_t}$, obtaining
$$
  E_0(\p(\Omega\cap\overline{B}_t))=\frac{1}{\omega_{n-1}}\int_{\mathbb{S}^{n-1}}|\pi_{\sigma}(\Omega\cap\overline{B}_t)|\dH(\sigma).
$$
Thus to prove the lemma it is sufficient to show that
\begin{equation}
  \label{eq:E0 splitting ae t Bt}
   E_0(\p(\Omega\cap\overline{B}_t))= E_0(\delomega;B_t)+|\Omega\cap\partial B_t|\quad\text{ for $t>0$.}
\end{equation}
We now distinguish two cases, depending on the degeneracy of $\Omega.$

If $\Omega$ is degenerate, then also $\Omega\cap \overline{B}_t$ is degenerate. Moreover, note that $|\Omega\cap \p B_t|=0$, since  a sphere intersected with any hyperplane has vanishing $\mathcal{H}^{n-1}$-measure. Using this fact twice, and that $\delomega=\Omega$, we obtain 
\begin{align*}
  E_0(\p(\Omega\cap\overline{B}_t))=&2|\Omega\cap \overline{B}_t|=2|\Omega\cap B_t|
  =2|\delomega\cap B_t|
  \\
  =&E_0(\delomega;B_t)=E_0(\delomega;B_t)+|\Omega\cap \p B_t|.
\end{align*}
This proves \eqref{eq:E0 splitting ae t Bt} in the present case. 

Assume now that $\Omega$ is nondegenerate. Firstly, recall that for any two closed sets $C,D \subset \R^n$ we have
\begin{equation*} \label{eq:boundary_intersection}
    \p(C \cap D) =( \p C \cap D) \cup (C \cap \p D).
\end{equation*}
Using this identity and that $\p\Omega\cap \p B_t\subset \Omega\cap \p B_t$ gives that the boundary of the convex body $\Omega\cap\overline{B}_t$ is given by 
\[
\partial(\Omega\cap\overline{B}_t) = \bigl(\partial\Omega\cap \overline{B}_t\bigr) \cup \bigl(\Omega\cap\partial B_t\bigr)= \bigl(\partial\Omega\cap B_t\bigr) \cup \bigl(\Omega\cap\partial B_t\bigr).  
\] 
Note that the last expression on the right is a disjoint union.

Observe now that $B_t$ intersected with the interior of $\Omega$ cannot be empty, because if it were empty, then a hyperplane would separate $\Omega$ from $B_t$. And in such a case the intersection $\Omega\cap \overline{B}_t$ would be at most one point, which we have excluded already. Hence we have that $E_0(\partial(\Omega\cap\overline{B}_t)) = |\partial(\Omega\cap\overline{B}_t)|$ and, 
consequently,
\[ 
\begin{aligned} E_0\bigl(\partial(\Omega\cap\overline{B}_t)\bigr) &= \left|\partial(\Omega\cap\overline{B}_t)\right|= \left|\partial\Omega\cap B_t\right| + \left|\Omega\cap\partial B_t\right| = E_0(\partial\Omega;B_t) + \left|\Omega\cap\partial B_t\right|,\end{aligned} 
\] 
which proves \eqref{eq:E0 splitting ae t Bt} in the second case.

\end{proof}

We now recall a fact from geometric measure theory. Namely that for every convex body $D \subset \R^n$ we have
		\begin{equation} \label{eq:measure_zero_pBt}
			|\p D\cap \p B_t| = 0 \quad \text{ for a.e. }t>0.
		\end{equation}
		One can show this by applying the coarea formula on rectifiable sets (see for instance \cite[Theorem II.7.8]{maggi2012sets}) to the $\mathcal{H}^{n-1}$-rectifiable set $\p D$ with the Lipschitz function $u(x)=|x|$ which gives
		\begin{equation} \label{coarea}
			\int_{0}^\infty \mathcal{H}^{n-2} \left( \p D \cap \p B_t \right) \dd t  = \int_{\p D} \left|\nabla_T |x|\right| \dH(x), 
		\end{equation}
		where $\nabla_T$ denotes the tangential gradient on $\p D$. In particular since $\left|\nabla_T |x|\right| \leq 1$ for a.e. $x \in \p D$, \eqref{coarea} implies that the integral on the left-hand side is bounded, and thus
		\[
		\mathcal{H}^{n-2} \left( \p D \cap \p B_t \right) <\infty  \quad \text{ for a.e. }t>0.
		\]
		Therefore, for almost every $t>0$ we have $\mathcal{H}^{n-1}(\p D\cap \p B_t)=0,$ which is precisely \eqref{eq:measure_zero_pBt}.
    
We now establish the aforementioned continuity of the perimeter relative to a ball for convex bodies. 

	\begin{Lemma}[Continuity of the perimeter functional relative to a ball] \label{lm:continuity_relative_perimeter}
		Let $(\Omega_i)_{i \geq 1}$ be a sequence of convex bodies. Assume that $\Omega_i$ converges to $ \Omega$ in the Hausdorff metric for some convex body $\Omega$ as $i \to \infty$ . Then for a.e. $t>0$ it holds
		\begin{equation*}
			\begin{aligned}
				&  \lim_{i \to \infty} E_0(\p \Omega_i; B_t)=  E_0(\p \Omega; B_t) .
			\end{aligned}
		\end{equation*}
	\end{Lemma}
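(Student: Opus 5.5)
The plan is to use Lemma \ref{Lemma:Cauhy formula relative a ball}, which gives
$$
E_0(\p\Omega_i;B_t)=\frac{1}{\omega_{n-1}}\int_{\mathbb{S}^{n-1}}|\pi_\sigma(\Omega_i\cap\overline{B}_t)|\dH(\sigma)-|\Omega_i\cap\p B_t|,
$$
and to pass to the limit in each of the two terms separately, for $t$ outside a null set. The first term is the ordinary perimeter with multiplicity, $E_0(\p(\Omega_i\cap\overline{B}_t))$, by \eqref{eq:gy:cauchy surface general}. By Lemma \ref{lm:continuity_perimeter}, it suffices to show that $\Omega_i\cap\overline{B}_t\to\Omega\cap\overline{B}_t$ in the Hausdorff metric. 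For the second term, I will show that $|\Omega_i\cap\p B_t|\to|\Omega\cap\p B_t|$.

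\emph{Hausdorff convergence of intersections.} Intersection is not Hausdorff continuous in general, for instance when $\Omega$ only touches $\p B_t$. I will therefore restrict to $t$ such that either $\Omega\cap\overline{B}_t=\emptyset$, or $\Omega$ has a point in $B_t$.
\begin{itemize}
\item Every $t$ except the single value $t_0=\dist(0,\Omega)$ falls into one of these cases, so this excludes a null set.
\item If $\Omega\cap\overline{B}_t=\emptyset$, then $\Omega_i\cap\overline{B}_t=\emptyset$ for large $i$, and both sides of the lemma vanish.
\item Otherwise, fix $x_0\in\Omega\cap B_t$. Upper semicontinuity, namely that limit points of $\Omega_i\cap\overline{B}_t$ lie in $\Omega\cap\overline{B}_t$, follows from closedness.
\item For the converse inclusion, take $y\in\Omega\cap\overline{B}_t$. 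The point $y_\lambda=(1-\lambda)y+\lambda x_0$ lies in $\Omega$ with $|y_\lambda|\le t-\lambda(t-|x_0|)$. Pick $z_i\in\Omega_i$ with $|z_i-y_\lambda|<\varepsilon_i$; then $z_i\in B_t$ once $\varepsilon_i<\lambda(t-|x_0|)$.
\item Letting $\lambda\to0$ suitably with $i$ gives the convergence. Boundedness and compactness make this uniform in $y$, via a standard argument by contradiction with subsequences.
\end{itemize}

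\emph{Convergence of $|\Omega_i\cap\p B_t|$.} This is the step I expect to be the main obstacle, and I will use an almost-every-$t$ argument.
\begin{itemize}
\item If $\Omega$ is degenerate, then $|\Omega\cap\p B_t|=0$. Moreover $\Omega_i\subset\Omega+B_{\varepsilon_i}$, which lies in an $\varepsilon_i$-slab around a hyperplane. The measure of a sphere of radius $t$ inside such a slab tends to $0$, so the limit is $0$.
\item If $\Omega$ is nondegenerate, I use $\overline{B}_t\cap\{x:\dist(x,\Omega)\le\varepsilon\}\supset\Omega_i\cap\p B_t$ together with the reverse-type inclusion $\Omega_i\supset\{x\in\Omega:\dist(x,\p\Omega)\ge\varepsilon'\}$ for large $i$. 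The latter holds for convex sets converging in Hausdorff distance, by a separation argument.
\item These inclusions give
$$
|\{x\in\Omega:\dist(x,\p\Omega)\ge\varepsilon'\}\cap\p B_t|\le|\Omega_i\cap\p B_t|\le|(\Omega+\overline{B}_\varepsilon)\cap\p B_t|.
$$
As $\varepsilon,\varepsilon'\to0$, both bounds tend to $|\Omega\cap\p B_t|$ provided $|\p\Omega\cap\p B_t|=0$.
\item By \eqref{eq:measure_zero_pBt} applied with $D=\Omega$, this holds for a.e. $t$. This is exactly where the a.e. restriction enters.
\end{itemize}

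Combining the two limits in the formula of Lemma \ref{Lemma:Cauhy formula relative a ball} gives the claim for a.e. $t>0$.
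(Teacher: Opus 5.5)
Your proposal is correct and follows the paper's proof in structure. It uses the same decomposition via Lemma~\ref{Lemma:Cauhy formula relative a ball}, the same slab argument when $\Omega$ is degenerate, and the same appeal to \eqref{eq:measure_zero_pBt} when $\Omega$ is nondegenerate. The differences are local and all valid: you discard the single touching radius $t=\dist(0,\Omega)$ instead of running the paper's cylinder argument, you prove the Hausdorff convergence of $\Omega_i\cap\overline{B}_t$ directly rather than citing Schneider, and you squeeze $|\Omega_i\cap\p B_t|$ between inner and outer parallel sets via separation rather than proving pointwise convergence of indicators.
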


\begin{proof}
Since the formula of Lemma \ref{Lemma:Cauhy formula relative a ball} holds for all $\Omega_i$ and for $\Omega$, by dominated convergence theorem, as in Lemma \ref{lm:continuity_perimeter}, it is sufficient to prove for a given $t>0$, that for every $\sigma\in \mathbb{S}^{n-1}$
\begin{equation}
    \label{eq:two limits}
    \lim_{i\to\infty}|\pi_{\sigma}(\Omega_i\cap \overline{B}_t)=|\pi_{\sigma}(\Omega\cap \overline{B}_t)|\quad \text{ and }\quad \lim_{i\to\infty}|\Omega_i\cap \p B_t|=|\Omega\cap \p B_t|
\end{equation}
The second limit  might hold only for a.e. $t>0$, in case that $\Omega$ is nondegenerate.

Let us first prove the first limit in \eqref{eq:two limits}. We shall distinguish two cases depending on whether $\Omega\cap B_t$ is empty or not.

If $\Omega\cap B_t\neq \emptyset$ then  $\Omega$ and $\overline{B}_t$ cannot be separated by a hyperplane and thus from \cite[Theorem 1.8.10]{Schneider} it follows that $\Omega_i\cap \overline{B}_t$ converges to $\Omega\cap\overline{B}_t$ in the Hausdorff metric. In this case we can proceed exactly as we have proven \eqref{eq:claim_peri} in the proof of Lemma \ref{lm:continuity_perimeter} and obtain the first limit in \eqref{eq:two limits}. 

If $\Omega\cap B_t = \emptyset$, then there exists a separating hyperplane between $\Omega$ and $B_t.$ Without loss of generality $\Omega\subset \R^{n-1}\times(-\infty,-t]$ and $\Omega\cap\overline{B}_t$ contains at most one point, namely $(0,\ldots,0,-t)$. In particular $|\pi_{\sigma}(\Omega\cap \overline{B}_t)|=0$. So in this case it is sufficient to prove that 
\begin{equation}\label{eq:pi sigma Omegi cap}
  \lim_{i\to\infty} |\pi_{\sigma}(\Omega_i\cap \overline{B}_t)|=0.
\end{equation}
By hypothesis $\Omega_i$ converges to $\Omega$ in the Hausdorff metric, and hence for any given $0<\varepsilon<t$ the sets $\Omega_i$ are all contained in $\R^{n-1}\times(-\infty,-t+\varepsilon]$ for $i$ big enough. Thus $\Omega_i\cap \overline{B}_t$ is contained in the cylinder $\{(x',x_n)\in\R^n;\, |x'|\leq r_{\varepsilon}\text{ and }x_n\in [-t,-t+\varepsilon]\}$, where $r_{\varepsilon}^2=t^2-(t-\varepsilon)^2.$ Observe that $r_{\varepsilon}$ converges to zero, as $\varepsilon\to 0,$ and hence \eqref{eq:pi sigma Omegi cap} follows. 

We now deal with the second limit in \eqref{eq:two limits}. For this we distinguish whether $\Omega$ is degenerate or not. If $\Omega$ is degenerate, then $|\Omega\cap \p B_t|=0.$ We may assume, after a rotation, that $\Omega\subset\R^{n-1}\times\{a\}$ for some $a\in\R$. By assumption of the Hausdorff convergence, for every $\varepsilon>0$ there exists an $i_0$ such that
$$
  \Omega_i\subset\R^{n-1}\times[a-\varepsilon,a+\varepsilon]\quad\forall\, i\geq i_0\,.
$$
Thus we obtain that 
$$
  |\Omega_i\cap \p B_t|\leq |\R^{n-1}\times [a-\varepsilon,a+\varepsilon]\cap \p B_t|\to 0
  =|\Omega\cap \p B_t|\quad\text{ for $\varepsilon\to 0$}.
$$
This proves the second limit in \eqref{eq:two limits} in the degenerate case. 

Let us now consider the second limit in \eqref{eq:two limits} when $\Omega$ is not degenerate. In this case we will write the limit in the form
$$
  \lim_{i\to\infty}\int_{\p B_t}\one_{\Omega_i}(x)\dH(x)=\int_{\p B_t}\one_{\Omega}(x)\dH(x)
$$
and will use the dominated convergence theorem to conclude. It is thus sufficient to prove that, for almost every $t>0$
\begin{equation}
    \label{eq:ae t and ae x in pBt}
    \lim_{i\to\infty}\one_{\Omega_i}(x)=\one_{\Omega}(x)\quad\text{ for $\mathcal{H}^{n-1}$-a.e. $x\in \p B_t$}.
\end{equation}

By \eqref{eq:measure_zero_pBt}, we may choose $t>0$ such that 
$|\partial\Omega\cap\partial B_t|=0$.
For such a $t$ it holds that $\mathcal{H}^{n-1}$-a.e. $x\in\p B_t$ belongs to $\R^n\setminus\delomega.$ Therefore we will prove \eqref{eq:ae t and ae x in pBt} precisely for these $x$ distinguishing the two case whether $x$ is in the interior of $\Omega$ or $x\in \Omega^c.$ If $x \in \Omega^c$ then $d:=\dist(x, \Omega)>0$ and for $i$ sufficiently large we have $d_H(\Omega_i,\Omega)<d/2$ which implies $x \notin \Omega_i$ and proves \eqref{eq:ae t and ae x in pBt}.

Consider $x \in \mathring{\Omega}$, where $\mathring{\Omega}$ denotes the interior of $\Omega$ with respect to the standard topology in $\R^n$.
This case will follow easily from \cite[Lemma 1.8.1]{Schneider}, which states that $\Omega_i$ converges to $\Omega$ in the Hausdorff distance if and only if $\p\Omega_i$ converges to $\delomega$ in the same distance. 
If $x\in\mathring{\Omega}$, then $B_{2\varepsilon}(x)\subset\mathring{\Omega}$ for some $\varepsilon>0.$ Using \cite[Lemma 1.8.1]{Schneider} we obtain that $\delomega_i\subset \delomega+B_{\varepsilon}$ for all $i$ big enough. Hence for every $y\in \delomega_i$ there exists $z\in\delomega$ such that $|y-z|<\varepsilon$. This gives that
$$
  |x-y|\geq |x-z|-|z-y|>2\varepsilon-\varepsilon=\varepsilon.
$$
Since $y\in\delomega_i$ was arbitrary, we obtain that $B_{\varepsilon}(x)$ does not intersect $\delomega_i$ and is contained either in the interior of $\Omega_i$ or in $\Omega_i^c.$ The second case is not possible, contradicting the assumption $d_H(\Omega_i,\Omega)\to 0.$ Hence $x\in \Omega_i$ for all $i$ big enough, which proves \eqref{eq:ae t and ae x in pBt} in the case $x\in\mathring{\Omega}.$

\end{proof}

	We have now all the tools to provide the proof of Theorem \ref{thm:continuity_energy}.


\begin{customproof}{Theorem \ref{thm:continuity_energy}}
		Since the sequence $(\Omega_i)_{i \geq 1}$ is  bounded, there exists $R>0$ such that $\Omega_i \subset B_R$ for all $i \geq 1$, which implies $\Omega \subset B_R$ as well.
		Using the representation formula \eqref{eq:layer_cake_formula}, we can write the energy of each $\Omega_i$ as
		\begin{equation}\label{eq:E_phi_representation}
			E_\phi(\p \Omega_i) = \phi(R) E_0(\p \Omega_i) - \int_0^R \phi'(t) E_0(\p \Omega_i; B_t) \dd t.
		\end{equation}
		
		We analyze the convergence of the two terms on the right-hand side individually.
		
		First, by the continuity of the perimeter functional for convex bodies (Lemma \ref{lm:continuity_perimeter}), we immediately have 
		\[
		\lim_{i \to \infty} \phi(R) E_0(\p \Omega_i) = \phi(R) E_0(\p \Omega).
		\]
		For the integral term in \eqref{eq:E_phi_representation}, the integrand is given by $h_i(t) := \phi'(t) E_0(\p \Omega_i; B_t) \one_{[0,R]}(t)$.
		By Lemma \ref{lm:continuity_relative_perimeter}, we know that $E_0(\p \Omega_i; B_t) \to E_0(\p\Omega; B_t)$ for almost every $t > 0$. Thus, $h_i(t)$ converges pointwise almost everywhere to $\phi'(t) E_0(\p \Omega; B_t) \one_{[0,R]}(t)$.
		
		To apply the dominated convergence theorem, we need to find an integrable bounding function. By the monotonicity of the perimeter for convex sets \eqref{eq:monoton incl with multipl}, the relative perimeter of $\Omega_i$ inside the ball $B_t$ is bounded by the perimeter of the ball itself
		\[
		E_0(\p\Omega_i; B_t) \leq |\partial B_t| = \beta_{n-1} t^{n-1}.
		\]
		This yields the uniform pointwise bound
		\[
		|h_i(t)| \leq \beta_{n-1} |\phi'(t)| t^{n-1} \one_{[0,R]}(t).
		\]
	 The assumption \eqref{cond_int_derivative} on the weight function guarantees exactly that the dominating function $|\phi'(t)| t^{n-1}$ is integrable on $[0, R]$. Therefore, by the dominated convergence theorem, we can pass to the limit inside the integral
		\[
		\lim_{i \to \infty} \int_0^R \phi'(t) E_0(\p\Omega_i; B_t) \dd t = \int_0^R \phi'(t) E_0(\p\Omega; B_t)\, \dd t.
		\]
Thus, taking the two limits on the right-hand side of 
\eqref{eq:E_phi_representation}, we obtain $\lim_{i \to \infty} E_\phi(\p \Omega_i) = E_\phi(\p \Omega)$, which concludes the proof.
	\end{customproof}
    
\section{Monotonicity of the weighted perimeter}
\label{section:monotonicity}

Before proving Theorem \ref{thm:monotonicity}, we show that all the assumptions of the theorem are necessary.

Firstly, we emphasize that neither direction of monotonicity holds in general for decreasing weights.
To see this, consider the decreasing weight $\phi(t)=t^p$ with $-(n-1)<p<0$. On one hand, taking a ball $B_r \subset B_1$ with $0<r<1$, a direct computation yields $E_{p}(\p B_{r}) < E_{p}( \p B_{1})$. 
On the other hand, if we consider the degenerate set $B_{1}^{n-1} \times \{ 0\} =\{ (x',0) \in \R^n \, : \, |x'|\leq 1 \}$, which is contained in $B_1$, we have
\[
E_{p}( B_{1}^{n-1} \times \{ 0\} ) > E_{p}( \p B_{1}) \quad \text{for all $p$ sufficiently close to $-(n-1)$} .
\]

Secondly, the assumption $\phi(0)\geq 0$ cannot be removed. Indeed, the weight $\phi(t)=t-1$ is increasing, yet a direct computation reveals that $E_\phi(\p B_{1/4}) > E_\phi(\p B_{1/3})$, whereas $E_\phi(\p B_1) < E_\phi(\p B_2)$.

Finally, the condition $0 \in \Omega_1$ is required too. Consider the increasing weight $\phi(t) = t^2$ in $\R^2$, the unit square $\Omega_2 = [0,1]^2$, and the triangle $\Omega_1 = \{ (x_1,x_2) \in \Omega_2 : x_1+x_2 \ge 1 \}$. We have $\Omega_1 \subset \Omega_2$, but $0 \notin \Omega_1$. Since $\p \Omega_1$ and $\p \Omega_2$ coincide on $\{x_1=1\}$ and $\{x_2=1\}$, we only need to compare the energy on the remaining boundary portions. The segments of $\p \Omega_2$ lying on the axes yield $2 \int_0^1 t^2 \dd t = \frac{2}{3}$, whereas the portion of $\p \Omega_1$ on the diagonal yields $\sqrt{2} \int_0^1 \big(t^2 + (1-t)^2\big) \dd t = \frac{2\sqrt{2}}{3} > \frac{2}{3}$, showing that $E_2(\p \Omega_1) > E_2(\p \Omega_2)$.


We now focus on the proof of Theorem \ref{thm:monotonicity}.

	\begin{customproof}{Theorem \ref{thm:monotonicity}}
Using the monotonicity under inclusion for the perimeter of \eqref{eq:monoton incl with multipl} and the assumption $\phi(0)\geq 0,$ we obtain that $\phi(0)E_0(\p \Omega_1) \le \phi(0)E_0(\p \Omega_2)$.
Hence, due to formula \eqref{eq:formula_increasing}, it remains to prove that 
$\phi'(t)E_0(\p \Omega_1; B_t^c) \le \phi'(t)E_0(\p \Omega_2; B_t^c)$ for all $t\geq 0.$ Since $\phi'\geq 0$ by assumption, it is sufficient to show that
\begin{equation} \label{eq:thesis}
	    E_0(\p \Omega_1; B_t^c) \le E_0(\p \Omega_2; B_t^c) \quad \text{for } t \ge 0.
\end{equation} 
To show this we consider
the metric projection map $\pi:\R^n \to \Omega_1$ defined as
		\begin{equation}\label{eq:argmin}
		\pi(y)=\argmin_{x \in \Omega_1} |y-x|.
		\end{equation}
Let $H_a(x)$ be a separating hyperplane for $\Omega_1$ at $x \in \p \Omega_1$ with outer normal vector $a$: that is $H_a(x)=\{z\in \R^n \, :\, (z-x)\cdot a= 0 \}$ and $\Omega_1 \subset \{z\in \R^n \, :\, (z-x)\cdot a\leq 0 \}$. For any such $a$ the ray $\{ x + t a \, : t \geq 0\}$ must intersect $\p \Omega_2$ at some point $y \in \p \Omega_2$ and $\pi(y)=x$. Hence $ \partial \Omega_1\subset \pi(\partial \Omega_2) $.

Let now $x \in \partial \Omega_1 \cap B_t^c$ and $y \in \Omega_2$ such that $\pi(y)=x$. We claim that $y \in B_t^c$. To show this we use that $y-x$ must be orthogonal to a separating hyperplane for $\Omega_1$. After a rotation, we may assume that $H_{e_n}=\{ z \in \R^n \, : \, z_n=x_n\}$ is a separating hyperplane for $\Omega_1$ such that $\Omega_1 \subset \{ z \leq x_n\}$.
Because $0 \in \Omega_1$, we necessarily have $0 \leq x_n$. 
    Writing the coordinates as $z=(z',z_n) \in \R^{n-1} \times \R$, we have that $y=(x',y_n)$ for some $y_n \geq x_n\geq 0$ and this gives
\[
|y|^2 = |x'|^2 + y_n^2 \geq |x'|^2 + x_n^2 = |x|^2 \geq t^2, 
\]
which proves $y \in B_t^c$. In particular, we have shown that 
\begin{equation}\label{eq:inclusion pi and Btcompl}
 \p \Omega_1 \cap B_t^c \subset \pi \left( \p \Omega_2 \cap B_t^c\right).
\end{equation}

We now distinguish three cases depending on the degeneracy of the sets $\Omega_1,\, \Omega_2$. Firstly, if both $\Omega_1$ and $\Omega_2$ are degenerate then \eqref{eq:thesis} follows immediately from the fact that $\delomega_1=\Omega_1\subset\Omega_2=\delomega_2$.

In the second case let us assume that neither $\Omega_1$ nor $\Omega_2$ is degenerate. In that case we use that $\pi$ is a $1$-Lipschitz map (see for instance \cite[Theorem 1.2.1]{Schneider}) and for any set $A$ it holds (see for instance \cite[Theorem 2.8]{EvansGariepy})
\begin{equation} \label{eq:inclusion_1}
    \mathcal{H}^{n-1}(\pi(A)) \leq   \mathcal{H}^{n-1}(A).
\end{equation}
Thus, using \eqref{eq:inclusion_1} and \eqref{eq:inclusion pi and Btcompl}, we obtain that 
\[
E_0(\p \Omega_1;B_t^c)=\mathcal{H}^{n-1}(\p \Omega_1 \cap B_t^c)\leq\mathcal{H}^{n-1}(\p \Omega_2 \cap B_t^c) = E_0(\p \Omega_2;B_t^c),
\]
establishing \eqref{eq:thesis}.

The third case is when $\Omega_1$ is degenerate and $\Omega_2$ is nondegenerate. We can assume, after a rotation, that $\Omega_1 \subset \{ z \in \R^n \, : \, z_n=0\}$. As $\Omega_2$ is nondegenerate, there must exist some $y^\star \in \Omega_2$ such that $y_n^\star>0$ or $y_n^\star<0$. Let us assume $y_n^\star>0$. Define
\[
\Omega_2^+ = \Omega_2 \cap \{ z_n \geq 0\}, \quad \Omega_2^- = \Omega_2 \cap \{ z_n \leq 0\}.
\]
If $ x \in \p \Omega_1 \cap B_t^c$ then for both $a=\pm e_n$ the plane $H_a(x)$ is a separating hyperplane for $\Omega_1$. Thus, arguing as after \eqref{eq:argmin}, for almost every $ x \in \p \Omega_1 \cap B_t^c$ has at least two preimages under $\pi$, that is, there exist
\[
y^+ \in \p \Omega_2^+ \cap \{ z_n >0\} \quad \text{ and } \quad y^- \in \p \Omega_2^- \cap \{ z_n \leq 0\}
\]
such that $\pi(y^+)=x$ and $\pi(y^-)=x$. We have to now distinguish two subcases: namely $y^-$ can be chosen such that
$$
  \left\{ \begin{array}{rl}
   y^- \in \p \Omega_2^- \cap \{ z_n = 0\} & \text{ if $\Omega_2^-$ is degenerate}
   \\
    y^- \in \p \Omega_2^- \cap \{ z_n < 0\} & \text{ if $\Omega_2^-$ is not degenerate}.
    \end{array}\right.
$$

Therefore, up to an $\mathcal H^{n-1}$-negligible set, it follows that
\[
\p \Omega_1 \cap B_t^c \subset \pi\left( \p \Omega_2^+ \cap \{ z_n >0\} \right),\quad  \p \Omega_1 \cap B_t^c \subset 
\left\{ \begin{array}{l}
  \pi\left( \p \Omega_2^- \cap \{ z_n = 0\} \right) \text{ if $\Omega_2^-$ is degenerate}
   \\
  \pi\left( \p \Omega_2^- \cap \{ z_n < 0\} \right) \text{ if $\Omega_2^-$ is not degenerate}.
    \end{array}\right.
\]
Moreover, we have seen that if $x\in B_t^c,$ then $y^{\pm}\in B_t^c.$ 
Hence, using \eqref{eq:inclusion_1} for the first one of these inclusions, we infer
\begin{equation}\label{eq:Omega2plus}
\mathcal{H}^{n-1}(\p \Omega_1 \cap B_t^c) \leq \mathcal{H}^{n-1}(\p \Omega_2^+ \cap \{ z_n>0\} \cap B_t^c) = \mathcal{H}^{n-1}(\p \Omega_2 \cap \{ z_n>0\} \cap B_t^c). 
\end{equation}
Similarly, depending on whether $\Omega_2^-$ is degenerate or not, we fall in either the first or the second case. If $\Omega_2^-$ is not degenerate, then using  \eqref{eq:inclusion_1}  gives
\begin{equation}\label{eq:Omega2minus}
 \begin{split}
 \mathcal{H}^{n-1}(\p \Omega_1 \cap B_t^c) \leq \mathcal{H}^{n-1}(\p \Omega_2^- \cap \{ z_n < 0\} \cap B_t^c)=& \mathcal{H}^{n-1}(\p \Omega_2 \cap \{ z_n < 0\} \cap B_t^c)
 \\
 =& \mathcal{H}^{n-1}(\p \Omega_2 \cap \{ z_n \leq  0\} \cap B_t^c).
 \end{split}
\end{equation}
In the last identity we have used that $\Omega_2^-$ is convex and nondegenerate.
If $\Omega_2^-$ is degenerate, then $\p\Omega_1=\Omega_1\subset \Omega_2^-=\p\Omega_2^-$ and the final conclusion of \eqref{eq:Omega2minus} is valid as well. 
Hence, summing up \eqref{eq:Omega2plus} and \eqref{eq:Omega2minus} we obtain
\[
E_0(\p \Omega_1;B_t^c)= 2\mathcal{H}^{n-1}(\p \Omega_1 \cap B_t^c) \leq \mathcal{H}^{n-1}(\p \Omega_2 \cap B_t^c)=E_0(\p \Omega_2;B_t^c),
\]
which proves \eqref{eq:thesis}.
The case $\Omega_1$ nondegenerate and $\Omega_2$ degenerate cannot occur due to the assumption $\Omega_1 \subset \Omega_2$, and thus the proof is concluded.

	\end{customproof}

\section{Existence of minimizers}
\label{Sec:existence}

In \cite{CGR} we have established the existence of extremals for general weighted perimeters $E_\phi$ among convex bodies with a standard perimeter constraint. Among others, we have proven that there exists a minimizer  of $E_p$ if $p>0$ among all convex bodies $\Omega$ with $E_0(\p\Omega)=$constant.  However, the proof provided there is considerably more involved than necessary for weights which satisfy the monotonicity assumption of Theorem \ref{thm:monotonicity}, like $E_p$ with $p>0$. We have announced in \cite{CGR} that we shall provide a simpler proof in this case. The relevant step which is simplified is that of achieving the compactness of a minimizing sequence.

In dimension $n=2$, the diameter of a convex set is universally bounded by its perimeter. However, for $n \ge 3$, a sequence of convex bodies can have constant perimeter while the diameters tend to  infinity. To prevent this loss of compactness under the constraint $E_0(\p \Omega)=$ constant, the  next Lemma \ref{lm:bound_on_max_radius_increasing} establishes the crucial bound relating the  energy to the maximal radius of the set. This lemma is the analogue of  \cite[Lemma 2.4]{CGR} whose proof is independent of the monotonicity Theorem \ref{thm:monotonicity} and more involved. Both the statement and the proof of the next lemma are simpler than \cite[Lemma 2.4]{CGR}.

\begin{Lemma} \label{lm:bound_on_max_radius_increasing}

Let $n\geq3$. Let $\phi$  be a weight function according to Definition \ref{intro:gy:weight function} and assume $\phi$ is increasing  with $\phi(0) \ge 0$. Let $\Omega$ be a convex body in $\R^n$ with $\mathcal{H}^{n-1}(\p \Omega) >0$ such that $0 \in {\Omega}$. Define $R$ by $R:=\max \{|x| : x \in \p \Omega\}$. Then, it holds that
    \begin{equation} \label{est:radius_phi_pos}
        \frac{E_{\phi}( \p \Omega)}{E_0(\p \Omega)} \geq \frac{1}{8^{n-1}\,n!}  \, \phi\left(\frac{R}{2}\right). 
    \end{equation}

        
\end{Lemma}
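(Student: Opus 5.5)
The approach is to reduce \eqref{est:radius_phi_pos} to a purely geometric lower bound on the perimeter of $\Omega$ outside the ball $B_{R/2}$, using that $\phi$ is increasing and nonnegative. Since $\phi$ is increasing with $\phi(0)\ge 0$, we have $\phi(|x|)\ge 0$ for all $x$ and $\phi(|x|)\ge \phi(R/2)$ whenever $|x|\ge R/2$. Hence
\[
E_\phi(\p\Omega)\ \ge\ \phi\left(\tfrac R2\right)\,E_0(\p\Omega;B_{R/2}^c).
\]
It therefore suffices to prove $E_0(\p\Omega;B_{R/2}^c)\ge c_n\,E_0(\p\Omega)$ with $c_n=\frac{1}{8^{n-1}n!}$. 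The same reduction also follows from \eqref{eq:formula_increasing}.

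For the geometric bound, fix $x_0\in\p\Omega$ with $|x_0|=R$ and consider the homothetic copy $\Omega_1:=\lambda\Omega+(1-\lambda)x_0$, with $\lambda$ small, say $\lambda=1/4$. By convexity and $x_0\in\Omega$, we have $\Omega_1\subset\Omega$. Since $\Omega\subset\overline B_R$, the body $\Omega_1$ lies in $\overline B_{\lambda R}((1-\lambda)x_0)$, hence outside $B_{(1-2\lambda)R}=B_{R/2}$. Moreover $E_0(\p\Omega_1)=\lambda^{n-1}E_0(\p\Omega)$, so $\Omega_1$ carries a fixed fraction of the perimeter and sits entirely in $B_{R/2}^c$. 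To transfer this to $\Omega$ I would use inequality \eqref{eq:thesis} from the proof of Theorem \ref{thm:monotonicity}, namely $E_0(\p K_1;B_t^c)\le E_0(\p K_2;B_t^c)$ for $0\in K_1\subset K_2$. Since it requires the origin in the smaller set, I would apply it to $K_1:=\mathrm{conv}(\{0\}\cup\Omega_1)\subset\Omega$ and $K_2:=\Omega$, with $t=R/2$. This gives
\[
E_0(\p\Omega;B_{R/2}^c)\ge E_0(\p K_1;B_{R/2}^c).
\]

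The main obstacle is bounding $E_0(\p K_1;B_{R/2}^c)$ from below by a constant times $E_0(\p\Omega_1)$. The body $K_1$ is a "cone" over $\Omega_1$ with apex $0$. Its boundary splits into the part of $\p\Omega_1$ not visible from $0$, which lies in $B_{R/2}^c$, and the lateral cone surface, part of which also lies outside $B_{R/2}$. I would first try a projection/Cauchy formula argument, via \eqref{eq:gy:cauchy surface general}, comparing $|\pi_\sigma(K_1\setminus B_{R/2})|$ with $|\pi_\sigma(\Omega_1)|$. Failing that, I would use a volume comparison: pass from surface area to the volume of cones over boundary pieces, $|K|=\frac1n\int_{\p K}x\cdot\nu$, and compare with simplex/cylinder bounds. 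I expect this comparison to produce the factor $n!$ and additional powers of $2$, which combine with $\lambda^{n-1}=4^{-(n-1)}$ into $8^{-(n-1)}$. If the lateral surface causes trouble, I would instead shrink further and choose $\lambda$ so that the whole cone part of $\p K_1$ lying outside $B_{R/2}$ is controlled explicitly by the diameter estimate $\Omega_1\subset\overline B_{\lambda R}((1-\lambda)x_0)$. The hypothesis $\mathcal H^{n-1}(\p\Omega)>0$ only ensures that the ratio in \eqref{est:radius_phi_pos} is defined. The degenerate case is handled identically thanks to the multiplicity convention in $E_0$.
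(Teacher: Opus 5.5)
Your reduction is correct. Since $\phi$ is increasing with $\phi(0)\ge 0$, you have $E_\phi(\p\Omega)\ge \phi(R/2)\,E_0(\p\Omega;B_{R/2}^c)$. The homothetic copy $\Omega_1=\frac14\Omega+\frac34 x_0\subset\Omega$ avoids $B_{R/2}$ and satisfies $E_0(\p\Omega_1)=4^{-(n-1)}E_0(\p\Omega)$, so it is a good starting point. It is also a genuinely different route from the paper's, which builds a simplex $\Delta\subset\Omega$ from successive farthest points, bounds $E_\phi(\Delta)$ from below via Theorem \ref{thm:monotonicity} and Lemma \ref{lm:est_on_triangle}, and bounds $E_0(\p\Omega)$ by $2^{n-1}n!\,E_0(\Delta)$ through an enclosing box.

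However, your proposal stops exactly at the step that carries the content. You never show that $E_0(\p K_1;B_{R/2}^c)$ is at least a fixed multiple of $E_0(\p\Omega_1)$. The fact that $\Omega_1\subset\Omega$ lies outside $B_{R/2}$ says nothing by itself about how much of a boundary lies outside $B_{R/2}$, because perimeter monotonicity under inclusion is a global statement. The visible/lateral splitting of $\p K_1$ and the fallback plans (projections of $K_1\setminus B_{R/2}$, cone volumes) are only sketched. Your expectation that they ``produce the factor $n!$ and additional powers of $2$'' is reverse-engineered from the target constant. As written, this is a genuine gap.

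The missing idea is to localize with a half-space instead of the ball.
\begin{itemize}
\item Put $e=x_0/R$ and $H^+=\{x:\ x\cdot e\ge R/2\}$. Then $H^+\subset B_{R/2}^c$.
\item Every point of $\Omega_1$ has $x\cdot e\ge \frac34R-\frac14R=\frac R2$, so $\Omega_1\subset C:=\Omega\cap H^+$, and \eqref{eq:monoton incl with multipl} gives $E_0(\p\Omega_1)\le E_0(\p C)$.
\item If $\Omega$ is nondegenerate, then $\p C\subset(\p\Omega\cap H^+)\cup F$ with $F=\Omega\cap\p H^+$. For every $z$ in the orthogonal projection of $C$ onto $\p H^+$, the highest point of $C$ on the line $z+\R e$ lies in $\p\Omega\cap H^+$. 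Hence the image of $\p\Omega\cap H^+$ under this $1$-Lipschitz projection contains $F$, so $|F|\le|\p\Omega\cap H^+|$.
\item Therefore $|\p\Omega_1|\le|\p C|\le 2|\p\Omega\cap H^+|\le 2|\p\Omega\cap B_{R/2}^c|$.
\item If $\Omega$ is degenerate, directly $E_0(\p\Omega;B_{R/2}^c)=2|\Omega\cap B_{R/2}^c|\ge 2|\Omega_1|$.
\end{itemize}
In both cases $E_0(\p\Omega;B_{R/2}^c)\ge\frac{1}{2\cdot4^{n-1}}E_0(\p\Omega)$. Together with your reduction, this gives \eqref{est:radius_phi_pos} with the better constant $\frac{1}{2\cdot4^{n-1}}\ge\frac{1}{8^{n-1}n!}$. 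The argument needs neither $K_1$ nor \eqref{eq:thesis}, and it does not use $0\in\Omega$. Completed this way, your route is shorter than the paper's and avoids the simplex construction entirely.
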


As an example, consider the Lemma \ref{lm:bound_on_max_radius_increasing} applied to the energy $E_p$ with $p> 0$. This provides the estimate
$
E_p(\delomega)\geq C_{n,p} E_0(\delomega)R^p
$
for some constant $C_{n,p}>0$ depending only on $n$ and $p$.

The proof of Lemma \ref{lm:bound_on_max_radius_increasing} is a generalization of  \cite[Lemma 8]{FreitasLaugesenLiddell} where the authors deal with the case of the weight $|x|^2$ in $\R^3$ and our strategy is very similar. Our constant in \eqref{est:radius_phi_pos} is worse in this special case than the one obtained in \cite{FreitasLaugesenLiddell}. For particular weights our estimate can certainly be improved, since the Lemma \ref{lm:est_on_triangle}, a crucial ingredient, is merely a rough estimate of the energy of a simplex.  

The idea is to bound the energy of the convex body $\Omega$ from below by comparing it to the energy of a suitably constructed simplex contained within it and exploit monotonicity. 
Hence, we start by isolating a geometric estimate on the energy of a simplex, presented in the next lemma.

\begin{Lemma}\label{lm:est_on_triangle}
Let $n\geq 2$, let $\phi$ be an increasing weight function, $\phi(0)\geq0$ and $\Delta \subset \mathbb{R}^n$ be an $(n-1)$-dimensional simplex of vertices $0,z_1, z_2, \dots, z_{n-1}$ with $|z_1| \geq |z_2| \geq \dots \geq |z_{n-1}|$. Then the following estimate holds
\begin{equation}
\frac{E_\phi(\Delta)}{E_0(\Delta)} \ge \frac{1}{4^{n-1}} \, \phi\left(\frac{|z_1|}{2}\right).
\end{equation}

\end{Lemma}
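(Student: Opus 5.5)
The plan is to reduce the estimate to a purely geometric statement: there is a subset of $\Delta$ of relative measure at least $4^{-(n-1)}$ on which $|x|\geq |z_1|/2$. Since $\Delta$ is degenerate (it lies in an $(n-1)$-dimensional affine subspace through $0$), we have $\p\Delta=\Delta$, so
\[
E_\phi(\Delta)=2\int_\Delta \phi(|x|)\dH(x)\quad\text{and}\quad E_0(\Delta)=2|\Delta|.
\]
The factors $2$ cancel, and it suffices to show
\[
\int_\Delta\phi(|x|)\dH(x)\geq 4^{-(n-1)}\,|\Delta|\,\phi\!\left(\tfrac{|z_1|}{2}\right).
\]
If $|\Delta|=0$ the statement is trivial, or meaningless as a quotient, so we may assume $|\Delta|>0$.

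\emph{First step.} Every $x\in\Delta$ satisfies $|x|\leq |z_1|$. The function $x\mapsto|x|$ is convex, so its maximum on the simplex is attained at a vertex. Among the vertices $0,z_1,\dots,z_{n-1}$, the largest norm is $|z_1|$ by the ordering assumption.

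\emph{Second step.} We take the homothetic image of $\Delta$ with center $z_1$ and ratio $1/4$:
\[
S:=\Bigl\{\tfrac{3z_1+x}{4}\,:\,x\in\Delta\Bigr\}.
\]
Since $S$ consists of convex combinations of $z_1\in\Delta$ and points of $\Delta$, convexity gives $S\subset\Delta$. The homothety scales $(n-1)$-dimensional measure by $4^{-(n-1)}$, so $|S|=4^{-(n-1)}|\Delta|$. For $x\in\Delta$, the triangle inequality and the first step give
\[
\Bigl|\tfrac{3z_1+x}{4}\Bigr|\geq\tfrac{3|z_1|-|x|}{4}\geq\tfrac{|z_1|}{2}.
\]
Hence $|y|\geq|z_1|/2$ for every $y\in S$.

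\emph{Third step.} Because $\phi$ is increasing with $\phi(0)\geq 0$, we have $\phi\geq0$ on $(0,\infty)$. We can therefore drop the integral over $\Delta\setminus S$ and then use monotonicity of $\phi$ on $S$:
\[
\int_\Delta\phi(|x|)\dH\geq\int_S\phi(|y|)\dH\geq |S|\,\phi\!\left(\tfrac{|z_1|}{2}\right)=4^{-(n-1)}|\Delta|\,\phi\!\left(\tfrac{|z_1|}{2}\right).
\]
This gives the claim.

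There is no serious obstacle. The only point needing care is the choice of the ratio of the homothety. With ratio $1/2$, points near $-z_1$ direction could make $|(z_1+x)/2|$ small, so the lower bound $|y|\geq|z_1|/2$ would fail. The ratio $1/4$ combined with $|x|\leq|z_1|$ is exactly what makes the bound work, and it explains the constant $4^{-(n-1)}$.
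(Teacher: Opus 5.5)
Your proof is correct and is essentially the paper's argument. Your set $S$, the image of $\Delta$ under the homothety of ratio $1/4$ centered at $z_1$, is exactly the image $\varphi(C_T)$ of the paper's subregion $C_T=\tfrac14 T+(\tfrac34,0,\dots,0)$ under the linear parametrization, and the bound $|y|\geq|z_1|/2$ and the factor $4^{-(n-1)}$ come out the same way. The differences are minor: you work coordinate-free, using how $\mathcal{H}^{n-1}$ scales under a homothety instead of the Jacobian, and you state explicitly that $\phi\geq 0$, which the paper uses implicitly when it discards $T\setminus C_T$.
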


\begin{proof}
The simplex $\Delta$ is a degenerate convex body and can be parameterized by the linear map $\varphi(t_1,\dots,t_{n-1})$ as
\[
\varphi:(t_1,\dots,t_{n-1})\in T \mapsto \sum_{i=1}^{n-1} t_i \, z_i \quad \text{with } T := \{(t_1,\dots,t_{n-1})\, : \, t_i \geq 0,\sum_{i=1}^{n-1} t_i \leq 1  \} \]
The Jacobian of $\varphi$ is given by $|z_1 \wedge z_2 \wedge \dots \wedge z_{n-1}| = (n-1)!E_0(\Delta)/2$, see for instance \cite[Formula 9.12.4.3]{berger2009geometry}, so that
\begin{equation}\label{eq:gy:Ep Delta varphi}
  E_{\phi}(\Delta)=2\int_{\varphi(T)}\phi(|x|) \dd \Hau^{n-1}(x)=(n-1)!E_0(\Delta)\int_T \phi(|\varphi(t)|) \dd t_1\ldots \dd t_{n-1}
\end{equation}

We will estimate the integral from below by restricting the domain of integration to the subregion $C_T \subset T$ defined as follows
\[
C_T := \{(t_1,\dots,t_{n-1})\in T: \frac{3}{4}\leq t_1 \leq 1 \}.
\]
For $ (t_1,\dots, t_{n-1})\in C_T$ we have that $\sum_{i=2}^{n-1} t_i \leq 1-t_1 \leq \frac{1}{4}$,
and in particular
\[
|\varphi(t)|=\left|t_1 z_1 + \sum_{i=2}^{n-1} t_i z_i\right| \geq t_1|z_1|-\sum_{i=2}^{n-1}t_i|z_i| \geq \left(t_1 - \sum_{i=2}^{n-1} t_i \right) |z_1| \geq \frac{1}{2} |z_1|
\]

Thus we can estimate $E_\phi(\Delta)$ in \eqref{eq:gy:Ep Delta varphi} as
\begin{align*}
E_\phi(\Delta)  \ge (n-1)!E_0(\Delta)\int_{C_T} \phi \left(\left| \varphi(t) \right| \right) \dd t_1 \dots \dd t_{n-1}
\ge (n-1)!E_0(\Delta) |C_T| \phi \left( \frac{1}{2}|z_1| \right).
\end{align*}
Since $C_T$ is a homothetic copy of $T$ scaled by $1/4$ and shifted, more precisely $C_T=1/4\,T+(3/4,0,\ldots,0)$, we have $|C_T| = 4^{-(n-1)}|T| = (4^{n-1}(n-1)!)^{-1}$. Substituting this yields
\begin{align*}
\frac{E_\phi(\Delta)}{E_0(\Delta)} \ge \frac{1}{4^{n-1}}\phi\left(\frac{1}{2}|z_1|\right),
\end{align*}
which concludes the proof.
\end{proof}

We are now in a position to prove Lemma \ref{lm:bound_on_max_radius_increasing}.

\begin{customproof}{Lemma \ref{lm:bound_on_max_radius_increasing}}

    Let $\Omega$ be a convex body in $\R^n$ and let $y_1 \in \partial\Omega$ be a point at maximum distance from the origin, so that $|y_1|=R$. Let $V_1:= \Span \{y_1\}$. Define $y_2 \in \p \Omega$ as a point farthest from $V_1$. More precisely, consider
\[
b_2:= \max \{ \dist(V_1,z) \, : \, z \in \p \Omega \}>0,
\]
and $y_2$ is the point where the maximum is achieved. Set $V_2=\Span \{y_1,y_2\}$. Iteratively define
\[
b_j:=\max \{ \dist(V_{j-1},z) \, : \, z \in \p \Omega \}, \quad \text{for }j=2,\dots, n-1,\;\text{ and }\; b_1:=R
\]
and the corresponding points $y_j$ where such maximum is achieved.  Notice that $b_j>0$ since  $\mathcal{H}^{n-1}(\p \Omega) >0$.
Let us define for $j=1,\ldots,n-1$, the $j$-dimensional simplex $\Delta_j$ of vertices $0,y_1, \dots, y_j$
by
$$
  \Delta_j:=\left\{ \sum_{i=1}^{j}t_iy_i;\,t_i\geq 0\,, \sum_{i=1}^{j}t_i\leq 1\right\},\,\text{ and }\, \Delta:=\Delta_{n-1}.
$$
It is easy to see by induction on $j$ that $\Delta$ is contained in $\Omega$.

The $(n-1)$-dimensional area $\mathcal{H}^{n-1}(\Delta)$ of the simplex $\Delta$ is given by (see again \cite[Formula 9.12.4.3]{berger2009geometry} for instance)
\begin{equation}\label{eq:gy:area of simplex}
  \frac{E_0(\Delta)}{2}=\mathcal{H}^{n-1}(\Delta)=\frac{1}{(n-1)!}b_1  \cdots  b_{n-1}.
\end{equation}

Moreover, the vertices of $\Delta$ are $\{0,y_1,\dots,y_{n-1} \}$ thus, by calling $P$ the projection of $\Omega$ onto $V_{n-1}$, we infer that $\Delta \subset P(\Omega)$. 
Since $\Delta\subset P(\Omega)$ and by \eqref{eq:gy:cauchy surface general}, we deduce 
\[ E_0(\Delta)\le E_0(P(\Omega))\le E_0(\p \Omega). 
\]
 Thus we obtain  that $E_0(\Delta)\leq E_0(P(\Omega)) \leq E_0(\p \Omega)$, and thus
\begin{equation}
 \label{eq:b times R const}
  b_2 \cdot \cdots \cdot b_{n-1}\leq (n-1)!\, \frac{E_0(\partial\Omega)}{2 b_1}
  =(n-1)!\, \frac{E_0(\partial\Omega)}{2 R}.
\end{equation}

By definition of $R$ we have that, up to a rotation, $\Omega \subset [-R,R] \times \R^{n-1}$. Similarly by construction and a simple induction argument, it follows that $\Omega$ is contained, up to a rotation, in the hyperrectangle
\begin{equation}
  \label{eq:Omega in cylinder}
\Omega\subset C_{R,b}:=[-R,R] \times [-b_2,b_2] \times  [-b_3,b_3] \times \cdots \times  [-b_{n-2},b_{n-2}] \times [-b_{n-1},b_{n-1}]^2.
\end{equation}

For future reference we define 
\[
b:= \left( b_2 \cdots b_{n-1}  \right)^{\frac{1}{n-2}},
\]
and  \eqref{eq:b times R const} becomes
\begin{equation} \label{eq:b small with R}
   b^{n-2} \leq (n-1)! \frac{E_0(\partial\Omega)}{2R}.
\end{equation}

Since $\phi$  is increasing and $\phi(0)\geq0$ we can make use of the monotonicity property of Theorem \ref{thm:monotonicity} and  compare the energy of the simplex $\Delta$ with the energy of $\partial \Omega$. This leads, using Lemma \ref{lm:est_on_triangle}, to
\begin{align*}
    E_\phi(\p \Omega) \geq E_\phi (\Delta) \geq
    \frac{1}{4^{n-1}}\, E_0(\Delta) \, \phi \left( \frac{R}{2}\right) .
\end{align*}
Finally, we show that the measure of the simplex is comparable to the perimeter of $\Omega$. Indeed, since $R\geq b_2 \geq \dots \geq b_{n-2} \geq b_{n-1}$, as a consequence of \eqref{eq:Omega in cylinder} one has that the two biggest faces of $\p C_{R,b}$ are
$$
   [-R,R] \times [-b_2,b_2] \times \cdots  \times [-b_{n-1},b_{n-1}]\times\{\pm b_{n-1}\}
$$
and each of them have $\mathcal{H}^{n-1}$-measure $2^{n-1} R \cdot b_2 \cdots b_{n-1}$.
Thus one gets
\begin{align*}
    E_0(\partial \Omega) \leq E_0
    (\p C_{R,b}) \leq n 2^n R \cdot b_2 \cdots b_{n-1} = 2^{n-1}\, n! \,E_0(\Delta).
\end{align*}
Combining both estimates one obtains that 
\begin{align*}
\frac{ E_\phi(\p \Omega) }{E_0(\partial \Omega) } \geq \frac{ E_\phi(\Delta) }{2^{n-1} n! E_0(\Delta) }\geq \frac{1}{2^{3n-3}\, n!} \, \phi \left( \frac{R}{2}\right)
=\frac{1}{8^{n-1}\,n!}  \, \phi\left(\frac{R}{2}\right).
\end{align*}
\end{customproof}

 As an application of the above mentioned results we obtain the following theorem on the existence of minimizers. This theorem is not new and is a special case of \cite[Theorem $1.1$]{CGR}. However, the results of the present paper allow for a short and self-contained proof.  

\begin{Theorem}
\label{cor:intro:existence1}
Let $n\geq 2$, $\phi$ be an increasing weight function with $\phi(0)\geq 0$ and $\lim_{t\to \infty} \phi(t)=\infty$. Then, for every $\lambda>0$ there exists a minimizer  of 
\begin{equation} \label{min_problem_p_pos}
   E_\phi(\p \Omega)\quad\text{among all convex bodies $\Omega\subset\R^n$ with $E_0( \p \Omega)=\lambda$.}
\end{equation}
\end{Theorem}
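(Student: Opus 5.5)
We use the direct method of the calculus of variations, with compactness coming from Blaschke's selection principle.

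Let $m:=\inf E_\phi(\p\Omega)$ over the admissible class. This infimum is finite, since any convex body scaled to have $E_0(\p\Omega)=\lambda$ is a competitor. It is also bounded below, because $\phi\geq\phi(0)\geq0$ gives $E_\phi\geq 0$. Take a minimizing sequence $(\Omega_i)$ with $E_0(\p\Omega_i)=\lambda$ and $E_\phi(\p\Omega_i)\to m$.

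The key observation is that $E_\phi$ and $E_0$ are not translation invariant and invariant respectively, so we may translate each $\Omega_i$ to lower its energy. Concretely, we would like $0\in\Omega_i$ and, more usefully, a uniform bound on $R_i:=\max\{|x|:x\in\p\Omega_i\}$.

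\emph{Step 1: put the origin inside.} If $0\notin\Omega_i$, we replace $\Omega_i$ by the translate $\Omega_i-\pi(0)$, where $\pi$ is the metric projection onto $\Omega_i$. We claim this does not increase the energy. Indeed, let $x_0=\pi(0)$. For every $x\in\Omega_i$ we have $(x-x_0)\cdot x_0\geq 0$, which gives $|x|^2\geq |x-x_0|^2+|x_0|^2\geq|x-x_0|^2$. Since $\phi$ is increasing, $\phi(|x-x_0|)\leq\phi(|x|)$ pointwise on $\p\Omega_i$. After translation, $E_0$ is unchanged and $E_\phi$ does not increase, so we may assume $0\in\Omega_i$ for all $i$.

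\emph{Step 2: uniform boundedness.} For $n\geq3$ we apply Lemma \ref{lm:bound_on_max_radius_increasing}, which is allowed since $\mathcal{H}^{n-1}(\p\Omega_i)\geq\lambda/2>0$. It gives
$$\phi(R_i/2)\leq 8^{n-1}n!\,E_\phi(\p\Omega_i)/\lambda,$$
and the right-hand side is uniformly bounded along the minimizing sequence. Because $\phi(t)\to\infty$, the radii $R_i$ are bounded. For $n=2$ we instead use that the diameter is at most half the perimeter. Since $0\in\Omega_i$, this gives $R_i\leq \lambda/2$, up to the multiplicity factor $2$ in the degenerate case. In either case $\Omega_i\subset B_{R}$ for a fixed $R$.

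\emph{Step 3: pass to the limit.} By the Blaschke selection principle, a subsequence converges in the Hausdorff metric to a convex body $\Omega$. Lemma \ref{lm:continuity_perimeter} gives $E_0(\p\Omega)=\lambda$, so $\Omega$ is admissible; in particular it is not a point, since $\lambda>0$. Theorem \ref{thm:continuity_energy} gives $E_\phi(\p\Omega)=\lim E_\phi(\p\Omega_i)=m$, so $\Omega$ is a minimizer.

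The main obstacle is the compactness in Step 2, which is already handled by Lemma \ref{lm:bound_on_max_radius_increasing}. What remains delicate is the translation trick of Step 1, because the monotonicity-based lemma needs $0\in\Omega$. The pointwise comparison $|x-x_0|\leq|x|$ on $\Omega_i$ settles this without any further appeal to Theorem \ref{thm:monotonicity}.
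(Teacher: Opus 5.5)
Your proposal is correct and follows the paper's proof. You translate so that $0\in\Omega_i$, bound the radii via Lemma \ref{lm:bound_on_max_radius_increasing} for $n\geq 3$ and via the diameter–perimeter bound for $n=2$, and conclude with Blaschke selection together with Lemma \ref{lm:continuity_perimeter} and Theorem \ref{thm:continuity_energy}. Your metric-projection estimate $|x-\pi(0)|\leq |x|$ on $\Omega_i$ is a precise version of the paper's brief remark that translating towards the origin decreases the norm of every point.
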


\begin{proof}
Let $(\Omega_i)_{i\geq 1}$ be a minimizing sequence of convex bodies with  $E_0( \p \Omega_i)=\lambda$ for all $i\geq 1$.
We observe that we can assume $0 \in {\Omega_i}$ for all $i \ge 1$ without loss of generality, since translating the $\Omega_i$ towards the origin decreases the Euclidean norm of all its points, thanks to the convexity of $\Omega_i$. This operation decreases the energy $E_\phi$.

 We claim that the sequence of sets $(\Omega_i)_{i\geq 1}$ is bounded; that is, there exists $R>0$ such that $\Omega_i\subset B_R$ for all $i\geq 1$. 
We argue by contradiction and assume the sequence is unbounded. 
Since we established that $0 \in {\Omega_i}$ for all $i\geq1$, we must have that
\begin{equation}\label{eq:gy:diam infty}
\lim_{i\to\infty}\text{diam}(\Omega_i)=\infty.
\end{equation}

If $n=2$, this immediately yields a contradiction, as the diameter of a convex set is universally bounded by its perimeter, which is equal to  $\lambda$ for all $i$. On the other hand, for $n\geq 3$, we will deduce that \eqref{eq:gy:diam infty} cannot hold by Lemma \ref{lm:bound_on_max_radius_increasing}, which is applicable since $0 \in {\Omega_i}$. By defining $R_i := \max_{x \in \partial\Omega_i} |x|$, the unbounded diameter assumption implies $R_i \to \infty$. Thus Lemma \ref{lm:bound_on_max_radius_increasing} together with the assumption $\lim_{t\to\infty}\phi(t)=\infty$ yields that $E_\phi(\p \Omega_i) \to \infty$. This contradicts the assumption that the sequence is minimizing.
This shows that \eqref{eq:gy:diam infty} cannot hold and that the sequence is bounded. 

From the Blaschke selection theorem, see for instance \cite[Theorem 1.8.7]{Schneider}, we infer that, up to a subsequence, $\Omega_i$ converge in the Hausdorff metric to a limiting compact convex set $\Omega_\infty$. The existence of the minimizer is then readily established by combining the continuity of the perimeter and of the energy $E_\phi$, which are guaranteed thanks to Theorem \ref{thm:continuity_energy}.
\end{proof}


\medskip
\noindent{\bf Data Availability} Not applicable. 

\smallskip
\noindent{\bf Conflict of interest} On behalf of all authors, the corresponding author states that there is no conflict of interest.

	\bibliographystyle{abbrv}
	\bibliography{Bib}
\end{document}